\documentclass[12pt]{amsart}
\usepackage{a4wide}
\usepackage{graphicx,eepic, bm, times}
\usepackage{amsthm,amsmath,amsfonts,amssymb}
\usepackage{multirow}



\newtheorem{lemma}{Lemma}[section]

\newtheorem{prop}[lemma]{Proposition}
\newtheorem{thm}[lemma]{Theorem}
\newtheorem{cor}[lemma]{Corollary}
\theoremstyle{definition}

\newtheorem{example}[lemma]{Example}
\newtheorem{conjecture}[lemma]{Conjecture}

\theoremstyle{remark}
\newtheorem{remark}[lemma]{Remark}

\numberwithin{equation}{section} \numberwithin{table}{section}

\begin{document}

\title[Approximation properties of $\beta$-expansions]{Approximation properties of $\beta$-expansions}
\author{Simon Baker}
\address{
School of Mathematics, The University of Manchester,
Oxford Road, Manchester M13 9PL, United Kingdom. E-mail:
simonbaker412@gmail.com}

\date{\today}
\subjclass[2010]{11A63, 37A45}
\keywords{Beta-expansion, Garsia number, Bernoulli convolution}

\begin{abstract}
Let $\beta\in(1,2)$ and $x\in [0,\frac{1}{\beta-1}]$. We call a sequence $(\epsilon_{i})_{i=1}^{\infty}\in\{0,1\}^{\mathbb{N}}$ a $\beta$-expansion for $x$ if $x=\sum_{i=1}^{\infty}\epsilon_{i}\beta^{-i}$. We call a finite sequence $(\epsilon_{i})_{i=1}^{n}\in\{0,1\}^{n}$ an $n$-prefix for $x$ if it can be extended to form a $\beta$-expansion of $x$. In this paper we study how good an approximation is provided by the set of $n$-prefixes.

Given $\Psi:\mathbb{N}\to\mathbb{R}_{\geq 0}$, we introduce the following subset of $\mathbb{R}$
$$W_{\beta}(\Psi):=\bigcap_{m=1}^{\infty}\bigcup_{n=m}^{\infty}\bigcup_{(\epsilon_{i})_{i=1}^{n}\in\{0,1\}^{n}}\Big[\sum_{i=1}^{n}\frac{\epsilon_{i}}{\beta^{i}},\sum_{i=1}^{n}\frac{\epsilon_{i}}{\beta^{i}}+\Psi(n)\Big]$$ In other words, $W_{\beta}(\Psi)$ is the set of $x\in\mathbb{R}$ for which there exists infinitely many solutions to the inequalities $$0\leq x-\sum_{i=1}^{n}\frac{\epsilon_{i}}{\beta^{i}}\leq \Psi(n).$$ When $\sum_{n=1}^{\infty}2^{n}\Psi(n)<\infty$ the Borel-Cantelli lemma tells us that the Lebesgue measure of $W_{\beta}(\Psi)$ is zero. When $\sum_{n=1}^{\infty}2^{n}\Psi(n)=\infty,$ determining the Lebesgue measure of $W_{\beta}(\Psi)$ is less straightforward. Our main result is that whenever $\beta$ is a Garsia number and $\sum_{n=1}^{\infty}2^{n}\Psi(n)=\infty$ then $W_{\beta}(\Psi)$ is a set of full measure within $[0,\frac{1}{\beta-1}]$. Our approach makes no assumptions on the monotonicity of $\Psi,$ unlike in classical Diophantine approximation where it is often necessary to assume $\Psi$ is decreasing.

\end{abstract}

\maketitle

\section{Introduction}
Let $\beta\in(1,2)$ and $I_{\beta}:=[0,\frac{1}{\beta-1}]$. Given $x\in I_{\beta}$ we say that a sequence $(\epsilon_{i})_{i=1}^{\infty}\in\{0,1\}^{\mathbb{N}}$ is a \emph{$\beta$-expansion} for $x$ if the following equation holds
\begin{equation}
\label{beta expansion equation}
x=\sum_{i=1}^{\infty}\frac{\epsilon_{i}}{\beta^{i}}.
\end{equation}It is a simple exercise to show that $x$ has a $\beta$-expansion if and only if $x\in I_{\beta}.$ Expansions of this form were pioneered in the papers of Parry \cite{Parry} and R\'enyi \cite{Renyi}. One significant difference between integer base expansions and $\beta$-expansions, is that almost every $x\in I_{\beta}$ has uncountably many $\beta$-expansions, unlike in the integer base case where every number has a unique expansion except for a countable set of exceptions which have precisely two. Whenever we use the phrase ``almost every," we always means with respect to Lebesgue measure. The fact that almost every $x\in I_{\beta}$ has uncountably many $\beta$-expansions is due to Sidorov \cite{Sid}.

We say that a finite sequence $(\epsilon_{i})_{i=1}^{n}\in\{0,1\}^{n}$ is an \emph{$n$-prefix for $x$} if there exists $(\epsilon_{n+i})_{i=1}^{\infty}\in\{0,1\}^{\mathbb{N}}$ such that $$x=\sum_{i=1}^{n}\frac{\epsilon_{i}}{\beta^{i}}+\sum_{i=1}^{\infty}\frac{\epsilon_{n+i}}{\beta^{n+i}}.$$ So an $n$-prefix for $x$ is simply any sequence of length $n$ that can be extended to form a $\beta$-expansion for $x$. It is straightforward to show that a sequence $(\epsilon_{i})_{i=1}^{n}\in\{0,1\}^{n}$ is an $n$-prefix for $x$ if and only if
\begin{equation}
\label{prefix equation}
0\leq x-\sum_{i=1}^{n}\frac{\epsilon_{i}}{\beta^{i}}\leq \frac{1}{\beta^{n}(\beta-1)}.
\end{equation}When $(\epsilon_{i})_{i=1}^{n}\in\{0,1\}^{n}$ is an $n$-prefix for $x,$ we also define the number $\sum_{i=1}^{n}\epsilon_{i}\beta^{-i}$ to be an $n$-prefix for $x$. Whether we are referring to a sequence or a number should be clear from the context. We refer to any number of the form $\sum_{i=1}^{n}\epsilon_{i}\beta^{-i}$ as a \emph{level $n$ sum}.

In this paper we study how well a typical $x\in I_{\beta}$ can be approximated by its prefixes. To this end we introduce the following general setup. Let $\Psi:\mathbb{N}\to\mathbb{R}_{\geq 0}$ and $$W_{\beta}(\Psi) :=\bigcap_{m=1}^{\infty}\bigcup_{n=m}^{\infty}\bigcup_{(\epsilon_{i})_{i=1}^{n}\in\{0,1\}^{n}}\Big[\sum_{i=1}^{n}\frac{\epsilon_{i}}{\beta^{i}},\sum_{i=1}^{n}\frac{\epsilon_{i}}{\beta^{i}}+\Psi(n)\Big].$$
Alternatively, $W_{\beta}(\Psi)$ is the set of $x\in \mathbb{R}$ such that for infinitely many $n\in\mathbb{N}$ there exists a level $n$ sum satisfying the inequalities
\begin{equation}
\label{approx equation}
0\leq x-\sum_{i=1}^{n}\frac{\epsilon_{i}}{\beta^{i}}\leq \Psi(n).
\end{equation} Our goal is to understand how well a typical $x\in I_{\beta}$ is approximated by its prefixes. In (\ref{approx equation}) the approximation to $x$ is given by a level $n$ sum, not necessarily an $n$-prefix for $x$. However, as the following argument shows, if (\ref{approx equation}) is satisfied by a level $n$ sum then it must also be satisfied by an $n$-prefix for $x$. For if $(\epsilon_{i})_{i=1}^{n}$ satisfies (\ref{approx equation}) and $(\epsilon_{i})_{i=1}^{n}$ is not an $n$-prefix for $x$, then $\Psi(n)> (\beta^{n}(\beta-1))^{-1}$ by (\ref{prefix equation}). Every element of $I_{\beta}$ has an $n$-prefix for each $n\in\mathbb{N}$. Let us denote the $n$-prefix for $x$ by $(\epsilon_{i}')_{i=1}^{n}.$ Applying (\ref{prefix equation}) we see that $$0\leq x-\sum_{i=1}^{n}\frac{\epsilon_{i}'}{\beta^{i}}\leq \frac{1}{\beta^{n}(\beta-1)}<\Psi(n).$$ Therefore, if $x\in W_{\beta}(\Psi)$ then there exists infinitely many $n$-prefixes for $x$ satisfying (\ref{approx equation}).

When $\sum_{n=1}^{\infty} 2^{n}\Psi(n)<\infty$ the Borel-Cantelli lemma tells us that $\lambda(W_{\beta}(\Psi))=0.$ Here and throughout $\lambda(\cdot)$ denotes the Lebesgue measure. Motivated by observations and results from metric number theory, we expect that if $\sum_{n=1}^{\infty} 2^{n}\Psi(n)=\infty$ and the level $n$ sums are distributed sufficiently uniformly throughout $I_{\beta}$ then $W_{\beta}(\Psi)$ is a set of full measure within $I_{\beta}$.

With the above in mind we introduce the following definition. We say that $\beta$ is \emph{approximation regular} if for each $\Psi:\mathbb{N}\to\mathbb{R}_{\geq 0}$ satisfying $\sum_{n=1}^{\infty} 2^{n}\Psi(n)=\infty,$ we have $W_{\beta}(\Psi)$ is a set of full measure within $I_{\beta}$. We make the following conjecture.

\begin{conjecture}
\label{conjecture 1}
Almost every $\beta\in(1,2)$ is approximation regular.
\end{conjecture}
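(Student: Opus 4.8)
The plan is to run, for Lebesgue-almost every $\beta$, a quantitative divergence Borel--Cantelli argument of Erd\H{o}s--Chung type, with the role of the classical ``independence of digits'' played by the near self-similarity of the family of level $n$ sums together with metric properties of Bernoulli convolutions. Fix $\Psi$ with $\sum_{n=1}^{\infty}2^{n}\Psi(n)=\infty$ and write $R_{n}\subseteq I_{\beta}$ for the (finite) set of level $n$ sums and $A_{n}:=\bigcup_{r\in R_{n}}[r,r+\Psi(n)]$, so that $W_{\beta}(\Psi)=\limsup_{n}A_{n}$. Since almost every $\beta$ is transcendental, for such $\beta$ the $2^{n}$ level $n$ sums are pairwise distinct, i.e.\ $|R_{n}|=2^{n}$; this is what makes $2^{n}\Psi(n)$, rather than $|R_{n}|\Psi(n)$ with $|R_{n}|$ smaller, the correct quantity. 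It suffices to prove that for a.e.\ $\beta$ and for \emph{every} subinterval $J\subseteq I_{\beta}$ one has $\lambda(W_{\beta}(\Psi)\cap J)\geq c\,\lambda(J)$ with $c>0$ absolute, since a Lebesgue density point argument then upgrades this to $\lambda(W_{\beta}(\Psi))=|I_{\beta}|$.

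The first ingredient is a good estimate for $\lambda(A_{n}\cap J)$. The normalised counting measures $2^{-n}\sum_{r\in R_{n}}\delta_{r}$ converge weakly to the Bernoulli convolution $\nu_{\beta}$, the distribution of the random series $\sum_{i=1}^{\infty}\epsilon_{i}\beta^{-i}$ with the $\epsilon_{i}$ independent and uniform on $\{0,1\}$ (in the normalisation adapted to $I_{\beta}$). Since $\nu_{\beta}$ is non-atomic with support $I_{\beta}$ and, by Solomyak's theorem, absolutely continuous with $L^{2}$ density for a.e.\ $\beta$, one expects $\lambda(A_{n}\cap J)\asymp\lambda(J)\,2^{n}\Psi(n)$ whenever $2^{n}\Psi(n)\lesssim 1$: the lower bound uses that equidistribution spreads a definite proportion of the $2^{n}$ points of $R_{n}$ over $J$, and the upper bound uses the $L^{2}$-regularity of $\nu_{\beta}$ to control the overlaps created by abnormally close level $n$ sums. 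In the complementary regime $2^{n}\Psi(n)\gtrsim 1$ one instead shows that $A_{n}$ already fills a fixed proportion of $J$. In either case $\sum_{n}\lambda(A_{n}\cap J)=\infty$.

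The second, and decisive, ingredient is quasi-independence: for $m$ much larger than $n$,
\[
\lambda(A_{n}\cap A_{m}\cap J)\ \lesssim\ \frac{\lambda(A_{n}\cap J)\,\lambda(A_{m}\cap J)}{\lambda(J)}.
\]
Here one exploits the identity $R_{m}=\bigcup_{r\in R_{n}}\bigl(r+\beta^{-n}R_{m-n}\bigr)$: near a given point of $R_{n}$ the level $m$ sums form a rescaled, translated copy of the level $m-n$ sums, so, conditioned on lying close to $R_{n}$, the event defining $A_{m}$ behaves like the level $m-n$ version of the event defining $A$, with only weak coupling. Turning this heuristic into the displayed bound requires a \emph{rate} for the equidistribution $2^{-k}\sum_{r\in R_{k}}\delta_{r}\to\nu_{\beta}$ and careful bookkeeping of the multiplicities in the union above. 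Granting the two ingredients, the Erd\H{o}s--Chung inequality applied inside $J$ gives $\lambda(W_{\beta}(\Psi)\cap J)\gtrsim\lambda(J)$ uniformly, completing the scheme; equivalently, the whole argument can be recast in the language of local ubiquity and the divergence Mass Transference Principle. Either way, no monotonicity of $\Psi$ is ever used, which is the source of the flexibility advertised in the abstract.

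The main obstacle --- and the reason the statement is a conjecture rather than a theorem --- is making the metric input about $\nu_{\beta}$ \emph{quantitative and uniform over a full-measure set of $\beta$}. When $\beta$ is a Garsia number this is supplied cleanly by Garsia's separation lemma, which forces distinct level $n$ sums to be $\gtrsim 2^{-n}$ apart and makes $\nu_{\beta}$ absolutely continuous with bounded density, so that the overlap and equidistribution-rate estimates above follow with explicit constants; this is the content of the paper's main theorem. For a.e.\ $\beta$ no such pointwise separation is available --- level $n$ sums can be super-exponentially close --- and one has only soft information such as $\nu_{\beta}\in L^{2}$, coming from the transversality method of Peres--Schlag and Solomyak. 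Converting transversality into the spacing, non-concentration, and effective-equidistribution statements required above --- presumably by integrating the relevant estimates over a parameter interval of $\beta$'s and summing efficiently over the $\approx 4^{n}$ pairs of length-$n$ words --- is where the genuine difficulty lies.
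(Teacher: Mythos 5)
This statement is Conjecture~1.1 of the paper, and the paper explicitly does \emph{not} prove it (``In this paper we fail to prove Conjecture~\ref{conjecture 1}''); what is proved is the special case of Garsia numbers (Theorem~\ref{main theorem}). Your write-up is a strategy outline rather than a proof, and the two steps you label as ``ingredients'' are exactly the open content of the conjecture. Concretely: (i) the lower bound $\lambda(A_{n}\cap J)\gtrsim \lambda(J)\,2^{n}\Psi(n)$ needs a positive proportion of the level $n$ sums in $J$ to be pairwise separated at scale $\Psi(n)$ (or at least that the $2^{n}$ intervals do not pile on top of one another); distinctness of the sums for transcendental $\beta$ gives nothing quantitative, and $\nu_{\beta}\in L^{2}$ controls overcounting only on average from above, not from below. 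In the paper this step is supplied for Garsia $\beta$ by Garsia's separation lemma ($|r-r'|>K_{2}2^{-n}$ for distinct level $n$ sums), the truncation $\tilde{\Psi}(n)=\min\{\Psi(n),K_{2}2^{-n}\}$, and Kempton's counting results giving $|\Sigma_{\beta,n}(y)|\asymp 2^{n}/\beta^{n}$ at density points; none of these is available for a.e.\ $\beta$. (ii) The quasi-independence bound $\lambda(A_{n}\cap A_{m}\cap J)\lesssim\lambda(A_{n}\cap J)\lambda(A_{m}\cap J)/\lambda(J)$ is asserted from a self-similarity heuristic plus an unspecified effective equidistribution rate for $2^{-k}\sum_{r\in R_{k}}\delta_{r}\to\nu_{\beta}$; no such rate is known generically, and you acknowledge as much in your final paragraph. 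A proof that defers its two key estimates to ``where the genuine difficulty lies'' has not proved the statement.

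That said, your outline of how the argument \emph{does} go through in the Garsia case is an accurate description of the paper's Section~3: Lebesgue density reduction to a local positive-proportion estimate, construction of well-separated target intervals of total measure $\asymp 2^{n}r\tilde{\Psi}(n)$, pairwise overlap bounds from the $K_{2}2^{-n}$ separation, and the Chung--Erd\H{o}s inequality. So the diagnosis of where the generic case breaks is correct; what is missing is any mechanism for replacing pointwise separation and bounded density by information valid for almost every $\beta$, and that is precisely why the statement remains a conjecture.
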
 We cannot hope to extend this almost every statement to an every statement. For example, if we take $\beta$ to be a Pisot number, i.e., a real algebraic integer strictly greater than $1$ whose conjugates all have modulus strictly less than $1$. Then the cardinality of the set of level $n$ sums is of the order $\beta^{n}.$ Taking $\Psi(n)=2^{-n}$ it is clear that $\sum_{n=1}^{\infty}2^{n}\Psi(n)=\infty.$ However a simple covering argument appealing to the Borel-Cantelli lemma implies $\lambda(W_{\beta}(\Psi))=0.$

In this paper we fail to prove Conjecture \ref{conjecture 1}. Instead we show that whenever $\beta$ is a special type of algebraic integer known as a Garsia number then $\beta$ is approximation regular. For our purposes a \emph{Garsia number} is a positive real algebraic integer with norm $\pm 2$, whose conjugates are all of modulus strictly greater than $1.$ Recall that the norm of an algebraic integer $\beta$ is defined to be the product of $\beta$ with all of its conjugates. The reader should be aware that in the literature Garsia numbers are not always defined to be positive, and in some cases are taken to be complex. Garsia numbers were first studied as a separate significant class of algebraic integers in a paper by Garsia \cite{Gar}. For more on Garsia numbers we refer the reader to the paper of Hare and Panju \cite{HP} and the references therein.

Our main result is the following.
\begin{thm}
\label{main theorem}
Let $\beta\in(1,2)$ be a Garsia number. Then $\beta$ is approximation regular.
\end{thm}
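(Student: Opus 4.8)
The plan is to establish a quantitative, scale-by-scale quasi-independence estimate for the events "$x$ is within $\Psi(n)$ of some level $n$ sum," and then feed this into a divergence Borel--Cantelli lemma (the Erd\H{o}s--Chung type, or the version for sequences of events that are quasi-independent on average). Concretely, for each $n$ let $E_n\subseteq I_\beta$ be the union over all level $n$ sums $s$ of the intervals $[s,s+\Psi(n)]$; then $W_\beta(\Psi)=\limsup_n E_n$. Since we may assume $\Psi(n)\le (\beta^n(\beta-1))^{-1}$ (larger values only help, by the prefix argument in the introduction), the relevant count is the number $N(n)$ of \emph{distinct} level $n$ sums, and the first moment is $\lambda(E_n)\asymp N(n)\Psi(n)$. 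For a Garsia number one knows the level $n$ sums are well separated: the Garsia separation lemma gives that distinct level $n$ sums differ by at least $c\beta^{-n}$ for some constant $c=c(\beta)>0$. This is exactly where the norm-$\pm 2$ and conjugates-outside-the-unit-disc hypotheses enter, and it forces $N(n)\asymp 2^n$, so $\lambda(E_n)\asymp 2^n\Psi(n)$ and $\sum_n\lambda(E_n)=\infty$.

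The heart of the argument is the correlation bound: I would show that there is a constant $C$ such that for all $m<n$,
\[
\lambda(E_m\cap E_n)\le C\,\lambda(E_m)\,\lambda(E_n)+(\text{error}),
\]
with a summable or otherwise controllable error term. The mechanism is that the level $n$ sums refine the level $m$ sums: a level $n$ sum is a level $m$ sum plus $\beta^{-m}$ times a level $(n-m)$ sum, so the collection of level $n$ sums lying in a window of size $\Psi(m)$ about a fixed level $m$ sum looks, after rescaling by $\beta^m$, like a copy of the level $(n-m)$ configuration intersected with an interval of length $\beta^m\Psi(m)$. Using the Garsia separation at scale $n-m$ to count these, one gets that the conditional density of $E_n$ inside each component interval of $E_m$ is $\asymp 2^{n-m}\Psi(n)=2^{-m}\lambda(E_n)\asymp\lambda(E_n)/\lambda(I_\beta)$ up to bounded multiplicative constants, which is precisely the quasi-independence needed. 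Technically this requires a uniform (in the base point and in the scale) version of the counting estimate; the Garsia property is robust enough to supply this, but making the constants genuinely uniform across all pairs $(m,n)$ is the delicate point.

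With the quasi-independence in hand, the divergence Borel--Cantelli lemma (in the form: if $\sum\lambda(E_n)=\infty$ and $\sum_{m,n\le N}\lambda(E_m\cap E_n)\le C(\sum_{n\le N}\lambda(E_n))^2$ for infinitely many $N$, then $\lambda(\limsup E_n)\ge 1/C>0$) yields $\lambda(W_\beta(\Psi))>0$. To upgrade from positive measure to full measure, I would invoke a zero--one law: $W_\beta(\Psi)$ is, up to null sets, invariant under the maps generating the level sums (equivalently, one can run the same argument localized to an arbitrary dyadic-type subinterval determined by fixing the first $k$ digits, since the level sums beyond level $k$ inside such a subinterval are again a rescaled copy of the whole configuration), so its measure is either $0$ or full. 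Combined with positivity this gives $\lambda(W_\beta(\Psi))=\lambda(I_\beta)$, i.e. $\beta$ is approximation regular. The main obstacle, as indicated, is the second step: proving the correlation inequality with constants uniform over all scale pairs, which is where the arithmetic of Garsia numbers (separation of level sums at every scale, not just asymptotically) is indispensable.
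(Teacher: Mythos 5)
Your first stage --- the Garsia separation of level $n$ sums, the truncation of $\Psi$, the first-moment estimate $\lambda(E_n)\asymp 2^n\Psi(n)$, the correlation bound obtained by counting how many level $n$ intervals can meet a fixed level $m$ interval, and the appeal to the Chung--Erd\H{o}s divergence lemma --- is essentially the engine of the paper's proof and, once you truncate at the correct scale, it does deliver $\lambda(W_\beta(\Psi))>0$. (A small repair first: cutting $\Psi(n)$ off at $(\beta^n(\beta-1))^{-1}$ is not enough for the lower bound $\lambda(E_n)\gtrsim 2^n\Psi(n)$, since $2^n\beta^{-n}\to\infty$ while $\lambda(E_n)\le\lambda(I_\beta)$; you must truncate at $K_2 2^{-n}$, the Garsia separation scale, so that the $2^n$ intervals are pairwise disjoint.)

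The genuine gap is in your upgrade from positive to full measure. Your proposed ``zero--one law'' rests on the claim that the level sums beyond level $k$ inside a cylinder determined by the first $k$ digits are a rescaled copy of the whole configuration, so that the localized argument runs with constants uniform in the cylinder. Quantitatively this fails: a level-$k$ cylinder has length $\asymp\beta^{-k}$ but its rescaled sub-copy contributes only $2^{n-k}$ of the $2^n$ level $n$ sums, so the density of level sums per unit length inside the cylinder is smaller than the global density by a factor $(2/\beta)^{-k}\cdot$(const), i.e.\ it degrades like $(\beta/2)^k\to 0$. Consequently the localized Chung--Erd\H{o}s lower bound $\lambda(W\cap I_a)\ge c\,\lambda(I_a)$ comes with a constant $c=c(k)\to 0$, and the density-point argument does not close. (The missing mass is recovered only because a typical point lies in $\asymp(2/\beta)^k$ overlapping cylinders --- which is exactly the information your one-cylinder self-similarity cannot see.) The paper circumvents this by localizing to balls $B_r(x)$ around Lebesgue-typical $x$ and importing two external inputs you do not have: Garsia's theorem that $\mu_\beta$ is absolutely continuous with \emph{bounded} density (together with Mauldin--Simon, so $h_\beta(x)>0$ a.e.), and Kempton's theorem identifying $h_\beta(x)$ with the normalized growth rate of $|\Sigma_{\beta,n}(x)|$. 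These give, for a.e.\ $x$ and all small $r$, that $B_r(x)$ contains $\asymp 2^n r$ level $n$ sums uniformly in $n$, which is precisely the uniform-in-$r$ count needed to run Chung--Erd\H{o}s locally and conclude $\lambda(W_\beta(\tilde\Psi)\cap B_r(x))\ge\delta r$, whence full measure by the Lebesgue density theorem. Without a substitute for this local counting input, your argument proves only $\lambda(W_\beta(\Psi))>0$.
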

\begin{remark}
It is worth commenting on the fact that throughout this paper we have imposed no restrictions on the monotonicity of $\Psi$. In classical Diophantine approximation, when $\Psi:\mathbb{N}\to\mathbb{R}_{\geq 0}$ is decreasing the set $$W(\Psi):=\Big\{x\in\mathbb{R}: \textrm{ there exists infinitely many } (p,q)\in\mathbb{Z}\times\mathbb{N} \textrm{ such that }\Big|x-\frac{p}{q}\Big|\leq \Psi(q)\Big\}$$ is either null or full with respect to Lebesgue measure depending on whether $\sum_{q=1}^{\infty} q\Psi(q)$ converges or diverges. In \cite{DufSch} Duffin and Schaeffer showed that it is not possible to relax the monotonicity assumption on $\Psi$. They constructed a function $\Psi:\mathbb{N}\to\mathbb{R}_{\geq 0}$ such that $\sum_{q=1}^{\infty} q\Psi(q)=\infty$ yet $\lambda(W(\Psi))=0.$
\end{remark}

Suppose $\beta$ is approximation regular and $\Psi:\mathbb{N}\to\mathbb{R}_{\geq 0}$ satisfies $\sum_{n=1}^{\infty}2^{n}\Psi(n)=\infty.$ For a Lebesgue generic $x\in I_{\beta}$ it is natural to ask whether $x$ has a $\beta$-expansion $(\epsilon_{i})_{i=1}^{\infty}\in\{0,1\}^{\mathbb{N}}$ such that the inequalities $$0\leq x-\sum_{i=1}^{n}\frac{\epsilon_{i}}{\beta^{i}}\leq \Psi(n)$$ are satisfied for infinitely many $n\in\mathbb{N}$. This turns out to be the case whenever $\Psi$ satisfies a mild technical condition. We say that $\Psi:\mathbb{N}\to\mathbb{R}_{\geq 0}$ is \textit{decaying regularly} if for each $m\in\mathbb{N}$ there exists $C_{m}\in\mathbb{N}$ such that
\begin{equation}
\label{decay reg 2}
\frac{\Psi(n+m)}{\Psi(n)}\geq \frac{1}{C_{m}}
\end{equation}holds for every $n\in\mathbb{N}.$ We emphasise that the constant $C_{m}$ is allowed to depend on $m$. As an example, when $\Psi(n)=2^{-n}$ then $\Psi$ is decaying regularly. For each $m\in\mathbb{N}$ we can take $C_{m}=2^{m}.$

\begin{thm}
\label{beta theorem}
Let $\beta$ be approximation regular and suppose $\Psi:\mathbb{N}\to\mathbb{R}_{\geq 0}$ is decaying regularly and satisfies $\sum_{n=1}^{\infty}2^{n}\Psi(n)=\infty$. Then for almost every $x\in I_{\beta}$ there exists a $\beta$-expansion for $x$ satisfying the inequalities $$0\leq x-\sum_{i=1}^{n}\frac{\epsilon_{i}}{\beta^{i}}\leq \Psi(n)$$ for infinitely many $n\in\mathbb{N}.$
\end{thm}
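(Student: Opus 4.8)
The plan is to apply approximation regularity to a slightly shrunken copy of $\Psi$ and then to glue the good prefixes it produces into a single expansion. The gluing is the heart of the matter: membership of $x$ in a set of the form $W_{\beta}(\cdot)$ supplies, for infinitely many $n$, an $n$-prefix of $x$ that approximates $x$ well, but $n$-prefixes of $x$ at different levels need not be nested, so they cannot simply be concatenated. I would circumvent this by recursing on the ``tail points'' of the expansion rather than on $x$ itself. For the shrinking: since $\sum_{n=1}^{\infty}2^{n}\Psi(n)=\infty$, a standard argument produces a sequence $a_{n}\downarrow 0$ with $a_{n}\le 1$ and $\sum_{n=1}^{\infty}2^{n}a_{n}\Psi(n)=\infty$. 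Put $\Phi(n):=a_{n}\Psi(n)$. Then $\sum_{n=1}^{\infty}2^{n}\Phi(n)=\infty$, so since $\beta$ is approximation regular, $W_{\beta}(\Phi)$ has full measure in $I_{\beta}$.

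Next I would pass to the set
$$A:=\Big\{x\in I_{\beta}:\ \beta^{m}\Big(x-\sum_{i=1}^{m}\frac{\epsilon_{i}}{\beta^{i}}\Big)\in W_{\beta}(\Phi)\ \textrm{ for every }m\ge 0\textrm{ and every }m\textrm{-prefix }(\epsilon_{i})_{i=1}^{m}\textrm{ of }x\Big\},$$
and show $\lambda(I_{\beta}\setminus A)=0$. Fix $m\ge 0$ and $\sigma=(\epsilon_{i})_{i=1}^{m}\in\{0,1\}^{m}$, and put $s_{\sigma}=\sum_{i=1}^{m}\epsilon_{i}\beta^{-i}$. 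By (\ref{prefix equation}), $\sigma$ is an $m$-prefix of $x$ exactly when $x\in[s_{\sigma},s_{\sigma}+\frac{1}{\beta^{m}(\beta-1)}]$, and on this interval $x\mapsto\beta^{m}(x-s_{\sigma})$ is an affine bijection onto $I_{\beta}$; hence the set of $x$ in this interval with $\beta^{m}(x-s_{\sigma})\notin W_{\beta}(\Phi)$ has measure $\beta^{-m}\lambda(I_{\beta}\setminus W_{\beta}(\Phi))=0$. Taking the countable union over all $(m,\sigma)$ proves $\lambda(I_{\beta}\setminus A)=0$; note also (case $m=0$) that $A\subseteq W_{\beta}(\Phi)$.

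For $x\in A$ I would build the expansion recursively, together with levels $M_{1}<M_{2}<\cdots$ at which the approximation is good. Put $M_{0}=0$ and $w_{0}=x$. Given an $M_{k}$-prefix $(\epsilon_{i})_{i=1}^{M_{k}}$ of $x$ with level sum $s_{M_{k}}$, set $w_{k}=\beta^{M_{k}}(x-s_{M_{k}})\in I_{\beta}$; since $x\in A$ we have $w_{k}\in W_{\beta}(\Phi)$, so there exist arbitrarily large $n$ admitting a level $n$ sum within $\Phi(n)$ of $w_{k}$ from below. Choose such an $n=:j_{k}\ge 1$ large enough that $a_{j_{k}}\le\beta^{M_{k}}/C_{M_{k}}$ (with the convention $C_{0}=1$; possible since $a_{n}\downarrow 0$), and --- replacing the chosen level $j_{k}$ sum by a genuine $j_{k}$-prefix of $w_{k}$ if its distance from $w_{k}$ happens to exceed $\frac{1}{\beta^{j_{k}}(\beta-1)}$ --- obtain a $j_{k}$-prefix $t_{k}$ of $w_{k}$ with $0\le w_{k}-t_{k}\le\Phi(j_{k})$. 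Appending the digits of $t_{k}$ to $(\epsilon_{i})_{i=1}^{M_{k}}$ yields an $M_{k+1}$-prefix of $x$, where $M_{k+1}:=M_{k}+j_{k}$, with level sum $s_{M_{k+1}}=s_{M_{k}}+\beta^{-M_{k}}t_{k}$ satisfying
$$0\le x-s_{M_{k+1}}=\beta^{-M_{k}}(w_{k}-t_{k})\le\beta^{-M_{k}}\Phi(j_{k})=\beta^{-M_{k}}a_{j_{k}}\Psi(j_{k})\le\frac{\Psi(j_{k})}{C_{M_{k}}}\le\Psi(M_{k+1}),$$
the last step being the decaying-regularly inequality (\ref{decay reg 2}). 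The prefixes $(\epsilon_{i})_{i=1}^{M_{k}}$ are nested with $M_{k}\to\infty$, and $x-s_{M_{k}}\le\frac{1}{\beta^{M_{k}}(\beta-1)}\to 0$, so the resulting infinite digit string is a $\beta$-expansion of $x$ satisfying $0\le x-\sum_{i=1}^{n}\epsilon_{i}\beta^{-i}\le\Psi(n)$ for each of the infinitely many $n=M_{1},M_{2},\ldots$.

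The single real obstacle is the gluing flagged above, and it is dealt with by running the recursion on the tail points $w_{k}=\beta^{M_{k}}(x-s_{M_{k}})$: because $x$ lies in the full measure set $A$, every $w_{k}$ is again in $W_{\beta}(\Phi)$, so the process never stalls. The decaying-regularly hypothesis is used only to absorb the factor $\beta^{-M_{k}}$ incurred by working inside an $M_{k}$-th cylinder, turning a $\Phi(j_{k})$-approximation of $w_{k}$ into a $\Psi(M_{k}+j_{k})$-approximation of $x$; this is precisely why $\Psi$ has to be weakened by the slowly decaying factor $a_{n}$ before approximation regularity is invoked.
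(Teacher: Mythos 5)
Your proof is correct and follows essentially the same route as the paper's: you delete a null set of $x$ whose tail points $\beta^{m}(x-s_{\sigma})$ under the expanding maps $T_{0},T_{1}$ could fail to be well approximable, recurse on those tail points, and use the decaying-regularity constant $C_{M_{k}}$ to absorb the factor $\beta^{-M_{k}}$ incurred inside an $M_{k}$-cylinder. The only (harmless) variation is that you shrink $\Psi$ once to a single $\Phi(n)=a_{n}\Psi(n)$ with $a_{n}\downarrow 0$ and push the required smallness onto the choice of the level $j_{k}$, whereas the paper keeps the countable family $\Psi_{k}=\Psi/k$, intersects the full-measure sets $W_{\beta}(\Psi_{k})$, and at stage $k$ invokes $W_{\beta}(\Psi_{C^{k}})$ for a constant $C^{k}$ chosen large in terms of $M_{k}$.
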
As an application of Theorem \ref{main theorem} and Theorem \ref{beta theorem} we have the following result.
\begin{cor}
Let $\beta\in(1,2)$ be a Garsia number. Then for almost every $x\in I_{\beta}$ there exists a $\beta$-expansion of $x$ which satisfies the inequalities $$0\leq x-\sum_{i=1}^{n}\frac{\epsilon_{i}}{\beta^{i}}\leq \frac{1}{n2^{n}\log n}$$ for infinitely many $n\in\mathbb{N}.$
\end{cor}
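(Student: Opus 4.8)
The plan is to derive the corollary as a direct application of Theorem \ref{main theorem} and Theorem \ref{beta theorem}, so the only genuine work is checking that the threshold function fits the framework. Since $\beta$ is a Garsia number, Theorem \ref{main theorem} shows $\beta$ is approximation regular. It then suffices to verify that $\Psi(n)=\frac{1}{n2^{n}\log n}$ satisfies the two hypotheses of Theorem \ref{beta theorem}, namely $\sum_{n=1}^{\infty}2^{n}\Psi(n)=\infty$ and that $\Psi$ is decaying regularly; the conclusion is then immediate.

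Because $\log 1=0$, I would first adopt the harmless convention that $\Psi(n)=\frac{1}{n2^{n}\log n}$ for $n\geq 2$, with $\Psi(1)$ assigned any finite value. Redefining $\Psi$ on the finite set $\{1\}$ changes neither the divergence of $\sum 2^{n}\Psi(n)$ nor whether a given point lies in $W_{\beta}(\Psi)$. With this convention $\sum_{n=1}^{\infty}2^{n}\Psi(n)=\sum_{n\geq 2}\frac{1}{n\log n}=\infty$ by the integral test, since $\frac{d}{dx}\log\log x=\frac{1}{x\log x}$. This disposes of the first hypothesis.

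For the second, fix $m\in\mathbb{N}$. For every $n\geq 2$,
\[
\frac{\Psi(n+m)}{\Psi(n)}=2^{-m}\cdot\frac{n}{n+m}\cdot\frac{\log n}{\log(n+m)}\geq 2^{-m}\cdot\frac{1}{1+m}\cdot\frac{\log 2}{\log(2+m)},
\]
using $\frac{n}{n+m}\geq\frac{1}{1+m}$ together with the fact that $n\mapsto\frac{\log n}{\log(n+m)}$ is nondecreasing on $[2,\infty)$ (its derivative has the sign of $(n+m)\log(n+m)-n\log n>0$, as $t\mapsto t\log t$ is increasing). Hence the decaying regularly inequality holds for all $n\geq 2$ with any integer $C_{m}\geq 2^{m}(1+m)\frac{\log(2+m)}{\log 2}$, and enlarging $C_{m}$ if necessary absorbs the single remaining value $n=1$. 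Thus $\Psi$ is decaying regularly, and Theorem \ref{beta theorem} yields the corollary.

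I do not anticipate a real obstacle here: the corollary is effectively a worked example demonstrating that the hypotheses of Theorem \ref{beta theorem} are weak enough to capture a natural near-critical approximation rate. The only point that needs any attention is the vanishing of $\log n$ at $n=1$, handled by the convention above.
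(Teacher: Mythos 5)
Your proposal is correct and is exactly the argument the paper intends: the corollary is stated as an immediate application of Theorem \ref{main theorem} and Theorem \ref{beta theorem}, and the only content is verifying that $\sum_{n\geq 2}\frac{1}{n\log n}=\infty$ and that $\Psi(n)=\frac{1}{n2^{n}\log n}$ is decaying regularly, both of which you check correctly (the convention at $n=1$ should assign $\Psi(1)$ a \emph{positive} value so the ratio $\Psi(1+m)/\Psi(1)$ is defined, but this is cosmetic).
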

In Section \ref{proof of main theorem} we prove Theorem \ref{main theorem} and in Section \ref{beta theorem section} we prove Theorem \ref{beta theorem}. In Section \ref{last section} we discuss the connection between the set $I_{\beta}\setminus W_{\beta}(\Psi)$ and the set of points with a unique $\beta$-expansion. We end our introduction by giving a summary of related work undertaken by other authors.

In two recent papers by Persson and Reeve \cite{PerRev, PerRevA}, the authors considered a setup similar to that of our own. Let $$K_{\beta}(\Psi):=\bigcap_{m=1}^{\infty}\bigcup_{n=m}^{\infty}\bigcup_{(\epsilon_{i})_{i=1}^{n}\in\{0,1\}^{n}}\Big[\sum_{i=1}^{n}\frac{\epsilon_{i}}{\beta^{i}}-\Psi(n),\sum_{i=1}^{n}\frac{\epsilon_{i}}{\beta^{i}}+\Psi(n)\Big].$$ Notice that $W_{\beta}(\Psi)\subseteq K_{\beta}(\Psi).$ In the definition of $K_{\beta}(\Psi)$ the level $n$ sums form the centres of the significant intervals. Whereas in the definition of $W_{\beta}(\Psi)$ the level $n$ sums are the left endpoints of the significant intervals. The reason we have insisted on the level $n$ sums being the left endpoints is because we are interested in the approximation provided by an $n$-prefix, rather than a general level $n$ sum. It is an obvious consequence of (\ref{prefix equation}) that if $x<\sum_{i=1}^{n}\epsilon_{i}\beta^{-i}$ then $(\epsilon_{i})_{i=1}^{n}\in\{0,1\}^{n}$ cannot be an $n$-prefix for $x$.

Persson and Reeve studied the set $K_{\beta}(\Psi)$ when $\Psi(n)=2^{-\alpha n}$ for some $\alpha\in(1,\infty)$. In this case $\sum_{n=1}^{\infty}2^{n}\Psi(n)$ always converges. Motivated by Falconer \cite{Falconer} they studied the intersection properties of $K_{\beta}(\Psi)$. In \cite{Falconer} Falconer defined $G^{s}$ to be the set of $A\subseteq \mathbb{R},$ which have the property that for any countable collection of similarities $\{f_{j}\}_{j=1}^{\infty},$ we have $$\dim_{H}\Big(\bigcap_{j=1}^{\infty}f_{j}(A)\Big)\geq s.$$ Persson and Reeve generalised the definition of $G^{s}$ to arbitrary intervals $I$ by defining $G^{s}(I):=\{A\subseteq I: A+diam(I)\mathbb{Z}\in G^{s}\}.$ The main results of \cite{PerRev, PerRevA} can be summarised in the following theorem.

\begin{thm}
Let $\alpha\in(1,\infty)$ and $\Psi(n)=2^{-\alpha n}$.
\begin{itemize}
  \item For all $\beta\in(1,2),$ $\dim_{H}(K_{\beta}(\Psi))\leq \frac{1}{\alpha}$.
  \item For almost every $\beta\in(1,2),$ $K_{\beta}(\Psi)\in G^{s}(I_{\beta})$ for $s=\frac{1}{\alpha}.$
  \item For a dense set of $\beta\in(1,2),$ $\dim_{H}(K_{\beta}(\Psi))<\frac{1}{\alpha}.$
  \item For all $\beta\in(1,2)$, $K_{\beta}(\Psi)\in G^{s}(I_{\beta})$ for $s=\frac{\log \beta}{\alpha\log 2}.$
  \item For a countable set of $\beta\in(1,2),$ $\dim_{H}( K_{\beta}(\Psi))=\frac{\log \beta}{\alpha\log 2}.$
\end{itemize}
\end{thm}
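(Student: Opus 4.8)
The displayed statement collects the main results of Persson and Reeve, so rather than a single argument the plan is to treat two groups of items separately: the covering \emph{upper} bounds, namely items one and three together with the upper-bound half of item five, and the \emph{large intersection} lower bounds, namely items two and four together with the remaining half of item five, the latter phrased through Falconer's class $G^{s}$.

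For the upper bounds I would use the natural cover and nothing else. For every $m$ one has $K_{\beta}(\Psi)\subseteq\bigcup_{n\geq m}\bigcup_{(\epsilon_{i})_{i=1}^{n}}[s-\Psi(n),s+\Psi(n)]$, where $s$ runs over the level $n$ sums and each interval has length $2\cdot 2^{-\alpha n}$. Writing $N_{\beta}(n)$ for the number of distinct level $n$ sums, the $t$-dimensional Hausdorff content of $K_{\beta}(\Psi)$ is at most $\sum_{n\geq m}N_{\beta}(n)(2^{1-\alpha n})^{t}$, which tends to $0$ as $m\to\infty$ as soon as $\limsup_{n}N_{\beta}(n)^{1/n}<2^{\alpha t}$. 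Since $N_{\beta}(n)\leq 2^{n}$ this gives $\dim_{H}K_{\beta}(\Psi)\leq 1/\alpha$ for every $\beta$, which is item one. When $\beta$ is a Pisot number its level $n$ sums are separated by at least a constant multiple of $\beta^{-n}$, so $N_{\beta}(n)=O(\beta^{n})$ and the same computation yields $\dim_{H}K_{\beta}(\Psi)\leq\log\beta/(\alpha\log 2)<1/\alpha$; as the Pisot numbers are dense in $(1,2)$, this is item three.

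For the lower bounds I would work inside Falconer's framework. Write $K_{\beta}(\Psi)=\bigcap_{m}E_{m}$ with $E_{m}:=\bigcup_{n\geq m}\bigcup_{(\epsilon_{i})_{i=1}^{n}}[s-\Psi(n),s+\Psi(n)]$, an open set; since $G^{s}(I_{\beta})$ is closed under countable intersections, it suffices to show $E_{m}\in G^{s}(I_{\beta})$ for each $m$, and by Falconer's criterion this reduces to a lower bound $\mathcal{H}^{t}_{\infty}(E_{m}\cap B)\gtrsim|B|^{t}$ holding \emph{uniformly} over all subintervals $B\subseteq I_{\beta}$ and all $t<s$. To produce one, fix a rapidly increasing sequence of scales $n_{1}<n_{2}<\cdots$ and build a Cantor-type subset of $E_{m}\cap B$ whose stage-$k$ components are intervals centred at level $n_{k}$ sums that lie inside the stage-$(k-1)$ components; the mass distribution principle then gives content of order $|B|^{t}$ for $t$ up to $\log\gamma_{\beta}/(\alpha\log 2)$, where $\gamma_{\beta}$ is the exponential growth rate of the supply of level $n$ sums inside a fixed interval. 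Every point of $I_{\beta}$ has an $n$-prefix, so the level $n$ sums form a $\beta^{-n}/(\beta-1)$-net of $I_{\beta}$ and at least a constant multiple of $|B|\beta^{n}$ of them lie in any $B$; this already gives the exponent $\log\beta/(\alpha\log 2)$ for \emph{all} $\beta$, which is item four, and combining this lower bound with the Pisot upper bound of item three (the Pisot numbers being a countable subset of $(1,2)$) gives the equality in item five. Item two requires the stronger input that for almost every $\beta$ the level $n$ sums genuinely number $\asymp 2^{n}$ and are spread out at scale $2^{-n}$, so that a constant multiple of $|B|2^{n}$ of them lie in $B$ and the construction reaches every $t<1/\alpha$; this is a quantitative equidistribution statement for Bernoulli-convolution-type sums, for which the transversality techniques of Solomyak and Peres--Schlag are the natural tool.

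The hard part will be item two: turning control of Bernoulli convolutions valid for almost every $\beta$ into the scale- and location-uniform Hausdorff-content estimates that membership in $G^{s}$ requires forces one to apply the transversality estimates at all scales and positions simultaneously, which is considerably more delicate than a bare dimension computation. By contrast the covering bounds in items one and three are routine, and item four, though it needs a carefully organised multi-scale Cantor construction, rests only on the elementary fact that the prefixes of a point approximate it at the optimal rate.
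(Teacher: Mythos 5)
First, a point of reference: the paper does not prove this theorem. It is stated verbatim as a summary of the main results of Persson and Reeve \cite{PerRev, PerRevA}, so there is no internal proof to compare your proposal against; it has to be judged on its own terms and against the cited literature.

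On that basis, your treatment of items one, three and five is essentially correct and is the standard route. The covering computation $\sum_{n\geq m}N_{\beta}(n)\left(2^{1-\alpha n}\right)^{t}\to 0$ gives item one from $N_{\beta}(n)\leq 2^{n}$, and item three from the Pisot separation bound $N_{\beta}(n)=O(\beta^{n})$ (which follows from the same norm argument as Lemma~\ref{Garsia's lemma}, with the conjugates now inside the unit disc so that $\prod_{i}|P(\gamma_{i})-P'(\gamma_{i})|$ is bounded by a constant); Pisot numbers are dense, so item three follows. For item five you also need that $A\in G^{s}$ forces $\dim_{H}A\geq s$, which is part of Falconer's theory, and then item four plus the Pisot upper bound does give a countable family with equality. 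Item four is plausibly sketched: by \eqref{prefix equation} the level $n$ sums do form a $\beta^{-n}(\beta-1)^{-1}$-net of $I_{\beta}$, so every subinterval $B$ contains at least a constant times $|B|\beta^{n}$ distinct level $n$ sums; but the step from ``a Cantor-type subset exists'' to the content bound $\mathcal{H}^{t}_{\infty}(E_{m}\cap B)\geq c|B|^{t}$, uniform over all $B$ and all scales, is exactly the content of Persson and Reeve's Frostman-type lemma for sets with large intersections, and you have asserted it rather than proved it. The genuine gap is item two: you correctly identify that it needs the level $n$ sums to number $\asymp 2^{n}$ and to be spread at scale $2^{-n}$ for almost every $\beta$, and that transversality is the tool, but nothing in your proposal shows how transversality estimates (which are naturally averaged, $L^{2}$-type statements over $\beta$) are upgraded to the scale- and location-uniform lower bounds on Hausdorff content that membership in $G^{1/\alpha}(I_{\beta})$ demands. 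That upgrade is the whole difficulty of \cite{PerRev}; as written, your item two is a plan, not a proof.
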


The approximation properties of $\beta$-expansions were also studied in a paper by Dajani, Komornik, Loreti, and de Vries \cite{Daj}. Given $x\in I_{\beta}$ and $(\epsilon_{i})_{i=1}^{\infty}$ a $\beta$-expansion for $x.$ We say that $(\epsilon_{i})_{i=1}^{\infty}$ is an \textit{optimal expansion} if for every other $\beta$-expansion for $x$ the following holds for all $n\in\mathbb{N},$ $$x-\sum_{i=1}^{n}\frac{\epsilon_{i}}{\beta^{i}}\leq x-\sum_{i=1}^{n}\frac{\epsilon_{i}'}{\beta^{i}}.$$ In other words, a $\beta$-expansion for $x$ is an optimal expansion if for each $n\in\mathbb{N}$ the $n$-prefix $(\epsilon_{i})_{i=1}^{n}$ always provides the closest approximation to $x.$ Before we state the main result of \cite{Daj} we recall the definition of a multinacci number. A \textit{multinacci number} is the unique root of an equation of the form $x^{n}=x^{n-1}+\cdots +x +1$ lying in $(1,2),$ where $n\geq 2$. The golden ratio is a multinacci number, this is the case when $n=2$. It can be shown that every multinacci number is a Pisot number. The main result of \cite{Daj} is the following.

\begin{thm}
\begin{itemize}
  \item Let $\beta$ be a multinacci number, then every $x\in I_{\beta}$ has an optimal expansion.
  \item If $\beta\in (1,2)$ is not a multinacci number, then the set of $x\in I_{\beta}$ with an optimal expansion is nowhere dense and has zero Lebesgue measure.
\end{itemize}

\end{thm}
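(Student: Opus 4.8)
The plan is to recast the existence of an optimal expansion as an explicit arithmetic condition on $x$, to write the exceptional set as a countable union of intervals, and then to treat the two cases by analysing the gaps between consecutive level $n$ sums. For $x\in I_\beta$ and each $n$, let $M_n(x)$ be the largest level $n$ sum that is $\le x$. By (\ref{prefix equation}) this is automatically an $n$-prefix for $x$, and it is the largest among all $n$-prefixes for $x$; hence any optimal expansion of $x$ must have $M_n(x)$ as its $n$-prefix for every $n$. Conversely, if the words of $M_1(x),M_2(x),\dots$ are nested (each a prefix of the next) then, since $0\le x-M_n(x)\le\beta^{-n}/(\beta-1)\to 0$, they converge to an expansion of $x$, which is then optimal. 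So $x$ has an optimal expansion if and only if $(M_n(x))_n$ is nested; splitting on whether $M_n(x)+\beta^{-(n+1)}\le x$, one checks that nestedness at level $n$ is equivalent to $M_{n+1}(x)-M_n(x)\in\{0,\beta^{-(n+1)}\}$, which fails precisely when some level $n$ sum $t$ satisfies $M_n(x)<t\le x<M_n(x)+\beta^{-(n+1)}$.

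Writing $\sigma_n(x)$ for the largest level $n$ sum strictly below $M_n(x)$, it follows that the set of $x$ with no optimal expansion is $E=\bigcup_{n\ge1}B_n$, where $B_n$ is the set of $x$ with $M_n(x)-\sigma_n(x)<\beta^{-(n+1)}$ and $x\in[\sigma_n(x)+\beta^{-(n+1)},M_n(x)+\beta^{-(n+1)})$. Grouping by the value of $M_n(x)$ shows that $B_n$ is a finite union of half-open intervals, one for each pair of consecutive level $n$ sums that are closer together than $\beta^{-(n+1)}$, each of length at most the corresponding gap. The theorem thus reduces to understanding how small the gaps between consecutive level $n$ sums can be relative to $\beta^{-(n+1)}$.

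For $\beta$ multinacci of order $k$ I would use the identity $1=\sum_{i=1}^k\beta^{-i}$, equivalently $\beta^{-m}=\sum_{i=1}^k\beta^{-(m+i)}$ for all $m$. This yields a finite-state normal form for level $n$ sums (the greedy representative, which avoids the block $1^k$), with which one can enumerate the possible differences of consecutive level $n$ sums and verify that they are always at least $\beta^{-(n+1)}$. Hence $B_n=\emptyset$ for every $n$, so $E=\emptyset$: every $x\in I_\beta$ has an optimal expansion.

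For $\beta$ not multinacci, the greedy expansion of $1$ fails to have the multinacci shape, and this ``defect'' can be grafted onto any level $m$ sum: inside every cylinder — the affine copy $\big[\sum_{i=1}^m\epsilon_i\beta^{-i},\sum_{i=1}^m\epsilon_i\beta^{-i}+\beta^{-m}/(\beta-1)\big]$ of $I_\beta$ — and for arbitrarily large $n$ one produces a pair of consecutive level $n$ sums with gap below $\beta^{-(n+1)}$, hence a nonempty interval of $B_n$ lying in that cylinder. Thus $E$ has positive measure in every cylinder; using the self-similarity of the family of level $n$ sums (inside a level $m$ cylinder the level $(m+n)$ sums form a rescaled copy of the level $n$ sums) together with a Lebesgue density argument like the one behind Theorem \ref{main theorem}, one upgrades this to $\lambda(I_\beta\setminus E)=0$. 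The same proliferation of bad intervals shows that every nonempty open subinterval of $I_\beta$ contains one of the intervals comprising some $B_n$, so $E$ is dense with interior everywhere, whence its complement, the set of $x$ with an optimal expansion, is nowhere dense. The hard part will be the gap combinatorics — that consecutive level $n$ sums of a multinacci $\beta$ are never closer than $\beta^{-(n+1)}$, and that for every other $\beta$ gaps below $\beta^{-(n+1)}$ appear densely and at all large scales; the reduction to nestedness, the description of $E$, and the density/self-similarity bootstrap are routine by comparison.
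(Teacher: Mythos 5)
First, a point of comparison: the paper does not prove this statement at all --- it is quoted as the main theorem of the cited work of Dajani, Komornik, Loreti and de Vries \cite{Daj}, so there is no in-paper proof to measure you against. Judged on its own terms, your reduction is correct and, I believe, in the spirit of the actual argument: $x$ has an optimal expansion iff the maximal $n$-prefixes $M_n(x)$ are realised by a nested sequence of words, which (after a short K\"onig's-lemma step you elide, since the value $M_n(x)$ may be realised by several words) is equivalent to $M_{n+1}(x)-M_n(x)\in\{0,\beta^{-(n+1)}\}$ for every $n$; and this fails exactly on a countable union of intervals indexed by pairs of level $n$ sums at distance strictly less than $\beta^{-(n+1)}$. (Your ``some level $n$ sum $t$'' should read ``level $n+1$ sum'', and your $B_n$ only records failures caused by the nearest neighbour $\sigma_n(x)$ rather than by any level $n$ sum within $\beta^{-(n+1)}$ of $M_n(x)$; both are harmless since you only need $E\supseteq\bigcup_n B_n$ for the second bullet and ``no small gaps at all'' for the first.) The self-similarity plus Lebesgue-density bootstrap for the non-multinacci case is also the right mechanism, and it is genuinely needed: for a non-multinacci Pisot $\beta$ there are only $O(\beta^n)$ distinct level $n$ sums, so pigeonhole alone produces no small gaps and no measure.

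The genuine gap is that the entire dichotomy is carried by the two gap estimates, and you prove neither. (i) That a multinacci $\beta$ admits no pair of distinct level $n$ sums closer than $\beta^{-(n+1)}$ is a sharp and delicate claim: for the golden ratio $\phi^{-1}-\phi^{-2}=\phi^{-3}$, and for the tribonacci number $\beta^{-1}-\beta^{-2}-\beta^{-3}=\beta^{-4}$, so the minimal gap \emph{equals} the threshold at every level, and ``enumerate the differences via a greedy normal form'' is a plan rather than an argument --- the induction on $n$ that keeps every difference of two $\{0,1\}$-polynomials in $\beta^{-1}$ at least $\beta^{-(n+1)}$ in absolute value is where all the work lies. (ii) That \emph{every} non-multinacci $\beta\in(1,2)$ produces, at some level $n$, two level $n$ sums at distance strictly below $\beta^{-(n+1)}$ is the heart of the second bullet, and ``the defect in the greedy expansion of $1$ can be grafted on'' does not explain how the failure of $1=\sum_{i=1}^{k}\beta^{-i}$ manufactures such a pair (it does, e.g.\ for the plastic number one already has $\beta^{-1}-\beta^{-2}<\beta^{-3}$ at level $2$, but a uniform argument covering all non-multinacci $\beta$, Pisot ones included, is required). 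Without (i) and (ii) the proof establishes nothing that distinguishes the two cases; everything you have actually written is the routine reduction you yourself describe as such.
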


\section{Preliminaries}

In this section we state the necessary background information from the theory of Bernoulli convolutions. Let $\beta\in(1,2),$ the \emph{Bernoulli convolution} associated to $\beta$ is defined to be the measure $\mu_{\beta}$ where $$\mu_{\beta}(E)= \mathbb{P}\Big(\Big\{(\epsilon_{i})_{i=1}^{\infty}\in\{0,1\}^{\mathbb{N}}: \sum_{i=1}^{\infty}\frac{\epsilon_{i}}{\beta^{i}}\in E\Big\}\Big),$$ for any Borel set $E\subseteq \mathbb{R}$. Here $\mathbb{P}$ is the $(1/2,1/2)$ probability measure on $\{0,1\}^{\mathbb{N}}.$ It is a long standing problem to determine precisely those $\beta$ for which $\mu_{\beta}$ is absolutely continuous with respect to Lebesgue measure. When $\mu_{\beta}$ is absolutely continuous we denote the density function by $h_{\beta}$. We emphasise that the density function is only defined almost everywhere.

Jessen and Wintner showed that $\mu_{\beta}$ is either absolutely continuous with respect to the Lebesgue measure or purely singular \cite{JesWin}. This was later improved upon by Simon and Mauldin \cite{SimMau}, who showed that $\mu_{\beta}$ is either equivalent to the Lebesgue measure or purely singular \cite{SimMau}. Erd\H{o}s in \cite{Erdos} showed that whenever $\beta$ is a Pisot number then $\mu_{\beta}$ is purely singular. No other examples of $\beta\in(1,2)$ for which $\mu_{\beta}$ is singular are known. In a standout paper, Solomyak proved that for almost every $\beta\in(1,2)$ the Bernoulli convolution is absolutely continuous \cite{Solomyak}. This was later improved upon in a paper of Shmerkin \cite{Shm}, where it was shown that the set of $\beta\in(1,2)$ for which $\mu_{\beta}$ is singular has Hausdorff dimension zero. Loosely speaking, it is believed that whenever the level $n$ sums are distributed sufficiently uniformly throughout $I_{\beta},$ then the associated Bernoulli convolution will be absolutely continuous. Similarly, when the level $n$ sums are distributed sufficiently uniformly throughout $I_{\beta}$ we expect $\beta$ to be approximation regular. As such, the results of Shmerkin and Solomyak lend some weight to the validity of Conjecture \ref{conjecture 1}.

The following theorem due to Garsia \cite{Gar} will be essential in our later work.
\begin{thm}
\label{Garsia theorem}
If $\beta\in(1,2)$ is a Garsia number then $\mu_{\beta}$ is absolutely continuous. Moreover, the density of $\mu_{\beta}$ is bounded above by $$\frac{2}{\prod_{i=1}^{k} (\gamma_{i}-1)}.$$ Here $\gamma_{1},\ldots, \gamma_{k}$ are the conjugates of $\beta$.
\end{thm}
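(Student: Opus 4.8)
The plan is to deduce the theorem from \emph{Garsia's separation lemma}: there is a constant $c=c(\beta)>0$ such that any two distinct level $n$ sums differ by at least $c\cdot 2^{-n}$, and moreover all $2^{n}$ level $n$ sums are pairwise distinct. Granting this, absolute continuity and the density bound follow from a direct covering estimate, so the bulk of the work is the separation lemma. To prove it, take two distinct level $n$ sums $\sum_{i=1}^{n}\epsilon_{i}\beta^{-i}$ and $\sum_{i=1}^{n}\epsilon_{i}'\beta^{-i}$ and write their difference as $\beta^{-n}Q(\beta)$, where $Q(x)=\sum_{j=0}^{n-1}(\epsilon_{n-j}-\epsilon_{n-j}')x^{j}$ is a nonzero polynomial with coefficients in $\{-1,0,1\}$. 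Let $m$ be the minimal polynomial of $\beta$ and $\gamma_{1}=\beta,\gamma_{2},\dots,\gamma_{k}$ its roots. Here the hypothesis that $\beta$ has norm $\pm 2$ is essential: the constant term of the monic polynomial $m$ has absolute value $2$, so the nonzero coefficient of lowest degree of any nonzero multiple of $m$ has absolute value at least $2$; since every coefficient of $Q$ lies in $\{-1,0,1\}$, $Q$ is not divisible by $m$, hence is coprime to $m$, and therefore $Q(\gamma_{i})\neq 0$ for every $i$. In particular $Q(\beta)\neq 0$, which already shows the level $n$ sums are pairwise distinct.

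Next I would estimate $|Q(\beta)|$ from below. Since $Q\in\mathbb{Z}[x]$ and $m$ is monic, $\prod_{i=1}^{k}Q(\gamma_{i})$ is a nonzero integer, so $|Q(\beta)|\geq \prod_{i=2}^{k}|Q(\gamma_{i})|^{-1}$. For $i\geq 2$ the hypothesis that $|\gamma_{i}|>1$ gives $|Q(\gamma_{i})|\leq\sum_{j=0}^{n-1}|\gamma_{i}|^{j}<|\gamma_{i}|^{n}/(|\gamma_{i}|-1)$, and the norm condition gives $\prod_{i=2}^{k}|\gamma_{i}|=2/\beta$. Combining these, $|Q(\beta)|>(\beta/2)^{n}\prod_{i=2}^{k}(|\gamma_{i}|-1)$, so the two level $n$ sums differ by $\beta^{-n}|Q(\beta)|>c\cdot 2^{-n}$ with $c=\prod_{i=2}^{k}(|\gamma_{i}|-1)$. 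This is the separation lemma.

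For the deduction I would decompose $\mu_{\beta}=\mu_{\beta}^{(n)}*\nu_{n}$, where $\mu_{\beta}^{(n)}$ assigns mass $2^{-n}$ to each of the $2^{n}$ distinct level $n$ sums and $\nu_{n}$ is the law of $\sum_{i=n+1}^{\infty}\epsilon_{i}\beta^{-i}$, a probability measure supported on an interval of length $\delta_{n}=\beta^{-n}/(\beta-1)$. Fix $x\in\mathbb{R}$ and $t>0$. A level $n$ sum $s$ contributes to $\mu_{\beta}([x,x+t])$ only if $s\in[x-\delta_{n},x+t]$, an interval of length $t+\delta_{n}$; by the separation lemma at most $(t+\delta_{n})2^{n}/c+1$ level $n$ sums lie there, and each contributes at most $2^{-n}$, so $\mu_{\beta}([x,x+t])\leq (t+\delta_{n})/c+2^{-n}$. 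Letting $n\to\infty$ gives $\mu_{\beta}([x,x+t])\leq t/c$ for all $x$ and $t$. Hence $\mu_{\beta}$ assigns measure zero to any Lebesgue-null set (cover it by intervals of arbitrarily small total length), so $\mu_{\beta}\ll\lambda$, and evaluating the inequality at Lebesgue points of the density shows the density is bounded above by a finite constant of the form appearing in the statement; keeping track of the exact value $2/\prod_{i=1}^{k}(\gamma_{i}-1)$ is a matter of choosing $n$ carefully as a function of $t$.

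I expect the main obstacle to be the separation lemma, and within it the purely algebraic step that $Q$ does not vanish at any conjugate of $\beta$: this is precisely where the hypotheses ``norm $\pm 2$'' and ``all conjugates of modulus $>1$'' are used, and without them the level $n$ sums can cluster arbitrarily closely — as they do when $\beta$ is Pisot — and the conclusion fails. The remaining point, bookkeeping the sharp constant in the density bound, is routine once the separation lemma and the convolution estimate are in place.
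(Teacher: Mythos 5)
Your proposal is correct. Note that the paper does not actually prove this theorem: it is quoted as a black box from Garsia's paper \cite{Gar}, and the density information the paper really uses is then extracted via Kempton's counting theorems (Theorems \ref{Tom1} and \ref{Tom2}, yielding Proposition \ref{Tom prop}). What the paper \emph{does} prove is the separation statement at the heart of your argument, namely Lemma \ref{Garsia's lemma}, and your proof of it is the same resultant/norm computation: $Q$ has coefficients in $\{-1,0,1\}$, the norm $\pm 2$ forces $m\nmid Q$ hence $\prod_i Q(\gamma_i)\in\mathbb{Z}\setminus\{0\}$, and bounding $|Q(\gamma_i)|$ by the geometric series gives separation $K_2 2^{-n}$ with $K_2=\prod(|\gamma_i|-1)$ --- identical to the paper's constant. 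Your deduction of absolute continuity from the separation, via the convolution decomposition $\mu_\beta=\mu_\beta^{(n)}*\nu_n$ and the count of $K_2$-separated level $n$ sums in an interval of length $t+\delta_n$, is the standard (and correct) completion; in fact letting $n\to\infty$ for fixed $t$ already gives $\mu_\beta([x,x+t])\leq t/K_2$ uniformly, so no careful choice of $n$ as a function of $t$ is needed, and you obtain the density bound $1/\prod(|\gamma_i|-1)$, which is sharper than the stated $2/\prod(\gamma_i-1)$ (the latter should in any case be read with $|\gamma_i|$, since conjugates may be complex) and hence implies it.
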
 Garsia numbers are the largest explicit class of real numbers for which it is known that $\mu_{\beta}$ is always absolutely continuous.

Our proof of Theorem \ref{main theorem} also requires the following results taken from Kempton \cite{Kempton}. These results emphasise the connection between $\beta$-expansions and Bernoulli convolutions. Given $\beta\in(1,2)$ and $x\in I_{\beta},$ we denote the set of $n$-prefixes for $x$ by $\Sigma_{\beta,n}(x)$. In \cite{Kempton} the author studied the growth rate of $|\Sigma_{\beta,n}(x)|.$ In particular they studied the following limits
$$\underline{f}(x):=\liminf_{n\to\infty}\frac{(\beta-1)\beta^{n}}{2^{n}}|\Sigma_{\beta,n}(x)|,$$ and $$\overline{f}(x):=\limsup_{n\to\infty}\frac{(\beta-1)\beta^{n}}{2^{n}}|\Sigma_{\beta,n}(x)|.$$ The main results of this paper are the following two theorems.
\begin{thm}
\label{Tom1}
The Bernoulli convolution $\mu_{\beta}$ is absolutely continuous if and only if $$0<\int_{I_{\beta}}\overline{f}(x)dx<\infty.$$ In this case the density $h_{\beta}$ of $\mu_{\beta}$ satisfies $$h_{\beta}(x)=\frac{\overline{f}(x)}{\int_{I_{\beta}}\overline{f}(y)dy}.$$
\end{thm}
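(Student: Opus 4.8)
The plan is to establish both directions of the equivalence by relating the quantity $(\beta-1)\beta^{n}2^{-n}|\Sigma_{\beta,n}(x)|$ directly to the finite approximations of the density $h_{\beta}$. The key observation is that the number of $n$-prefixes for $x$ is essentially the $\mu_\beta$-measure of the interval of radius $\sim\beta^{-n}$ around $x$, rescaled: indeed, by (\ref{prefix equation}), the level $n$ sums $\sum_{i=1}^n \epsilon_i\beta^{-i}$ that are $n$-prefixes for $x$ are exactly those lying in $[x - (\beta^n(\beta-1))^{-1}, x]$, and each such prefix corresponds to a cylinder $[\epsilon_1,\dots,\epsilon_n]$ in $\{0,1\}^{\mathbb N}$ of $\mathbb{P}$-measure $2^{-n}$ whose image under the coding map lands in $[x - (\beta^n(\beta-1))^{-1}, x + (\beta^n(\beta-1))^{-1}]$ — more precisely, one checks that $(\epsilon_i)_{i=1}^n$ is an $n$-prefix for $x$ iff the full cylinder set $\{(\delta_i): \delta_i = \epsilon_i \text{ for } i\le n\}$ maps into $I_\beta$ in a way that contains $x$ in the closure of the image. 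Thus
$$
\frac{(\beta-1)\beta^{n}}{2^{n}}|\Sigma_{\beta,n}(x)| \;\approx\; (\beta-1)\beta^{n}\,\mu_{\beta}\Big(\big[x-\tfrac{1}{\beta^{n}(\beta-1)},\,x\big]\Big),
$$
which is a normalised version of a symmetric-derivative quotient for $\mu_\beta$ at $x$. I would first make this correspondence precise as a lemma, obtaining two-sided bounds (with constants depending only on $\beta$) between $|\Sigma_{\beta,n}(x)|$ and $\mu_\beta$ of intervals of length comparable to $\beta^{-n}$ around $x$.

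Granting that lemma, the forward direction goes as follows. If $\mu_\beta$ is absolutely continuous with density $h_\beta \in L^1(I_\beta)$, then by the Lebesgue differentiation theorem, for a.e.\ $x$ the averages $\beta^{n}(\beta-1)\mu_\beta([x-\tfrac{1}{\beta^n(\beta-1)}, x+\tfrac{1}{\beta^n(\beta-1)}])$ converge to (a constant multiple of) $h_\beta(x)$; combined with the lemma this pins down $\overline{f}(x) = c\, h_\beta(x)$ a.e.\ for an explicit normalising constant $c$, hence $\int \overline{f} = c\int h_\beta = c \in (0,\infty)$, and solving for $c$ gives the stated formula $h_\beta(x) = \overline{f}(x)/\int \overline{f}$. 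For the reverse direction, suppose $0 < \int_{I_\beta}\overline{f}(x)\,dx < \infty$. I would show $\mu_\beta$ is absolutely continuous by a Radon–Nikodym / Vitali-covering argument: the upper symmetric density $\limsup_{r\to 0} \mu_\beta([x-r,x+r])/(2r)$ is, via the lemma applied along $r = \beta^{-n}$ (and a routine interpolation between consecutive scales, using that $\beta^{-(n+1)}/\beta^{-n} = \beta^{-1}$ is bounded away from $0$), controlled above by a constant times $\overline{f}(x)$, which is finite a.e.\ and has finite integral; a standard theorem (e.g.\ via the Besicovitch covering lemma, or the characterisation that $\mu \ll \lambda$ iff the upper density is finite $\mu$-a.e.\ together with an integrability check) then yields absolute continuity, and the density formula follows as before.

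The main obstacle I anticipate is making the first lemma — the comparison between $|\Sigma_{\beta,n}(x)|$ and the local $\mu_\beta$-measure — genuinely two-sided and uniform. The subtlety is that distinct $n$-prefixes $(\epsilon_i)_{i=1}^n$ can give the \emph{same} level $n$ sum (this is exactly the overlap phenomenon responsible for multiple $\beta$-expansions), so $|\Sigma_{\beta,n}(x)|$ counts prefixes-as-sequences rather than distinct points, and one must argue that the cylinders $[\epsilon_1,\dots,\epsilon_n]$ in $\{0,1\}^n$ whose images meet a neighbourhood of $x$ are precisely (up to boundary effects) the $n$-prefixes for $x$, each contributing mass exactly $2^{-n}$. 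Handling the endpoints $x = 0$ and $x = 1/(\beta-1)$, and the possibility that $x$ itself is a level $n$ sum (so the interval in (\ref{prefix equation}) is attained at an endpoint), requires a little care but does not affect the a.e.\ statement. Once this counting lemma is in hand, both implications reduce to classical measure-theoretic density arguments, with the interpolation between the dyadic-like scales $\beta^{-n}$ and arbitrary radii $r$ being the only other place where the hypothesis $\beta < 2$ and boundedness of $\beta^{-1}$ away from $0$ is invoked.
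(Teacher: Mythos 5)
First, a remark on provenance: the paper does not prove Theorem \ref{Tom1} at all --- it is quoted from Kempton \cite{Kempton} and used as a black box --- so there is no in-paper argument to compare yours against, and your proposal has to stand on its own. As it stands it has a genuine gap at exactly the point you flag as the ``main obstacle'', and the obstacle is more serious than the prefix-versus-point counting issue you describe. The exact identity available is $2^{-n}|\Sigma_{\beta,n}(x)|=\mu_{\beta}^{(n)}\bigl(\bigl[x-\tfrac{1}{\beta^{n}(\beta-1)},x\bigr]\bigr)$, where $\mu_{\beta}^{(n)}$ is the law of the \emph{truncated} sum $\sum_{i=1}^{n}\epsilon_{i}\beta^{-i}$, not $\mu_{\beta}$ itself. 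Passing to $\mu_{\beta}$ means convolving with the law of the tail $\sum_{i>n}\epsilon_{i}\beta^{-i}$, which is supported on an interval of length exactly $\tfrac{1}{\beta^{n}(\beta-1)}$ --- the same scale as the window defining $\Sigma_{\beta,n}(x)$. The upper bound $\mu_{\beta}(I)\leq\mu_{\beta}^{(n)}(\tilde{I})$ for a slightly enlarged $\tilde{I}$ is fine, but the lower bound your lemma needs fails pointwise: a prefix $\nu$ lying very close to $x$ contributes only $2^{-n}$ times the tail-measure of $[0,x-\nu]$ to $\mu_{\beta}([x-c\beta^{-n},x])$, an uncontrolled fraction of $2^{-n}$. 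What one can honestly prove compares $|\Sigma_{\beta,n}(x)|$ with the $\mu_{\beta}$-mass of intervals around \emph{nearby} points $y\neq x$, and since $x\mapsto|\Sigma_{\beta,n}(x)|$ is a step function this does not convert into a two-sided bound at $x$ itself with constants depending only on $\beta$.

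Second, and independently of whether the comparison lemma can be salvaged: even granting two-sided bounds with a constant $C(\beta)>1$, you would only conclude $C^{-1}h_{\beta}\leq\overline{f}\leq Ch_{\beta}$ almost everywhere, which is strictly weaker than the exact identity $h_{\beta}=\overline{f}/\int\overline{f}$ that the theorem asserts; $\limsup_{n}a_{n}$ and $\lim_{n}b_{n}$ need not coincide merely because $a_{n}\asymp b_{n}$ termwise. The exactness of the formula, and the asymmetry between Theorem \ref{Tom1} (a $\limsup$, no boundedness hypothesis) and Theorem \ref{Tom2} (a $\liminf$, with bounded density required for the converse), are a strong signal that the correct argument is not a squeeze between symmetric-derivative quotients. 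The structure one should exploit instead is that $f_{n}:=\frac{(\beta-1)\beta^{n}}{2^{n}}|\Sigma_{\beta,n}(\cdot)|$ is \emph{exactly} the density of $\mu_{\beta}^{(n)}$ convolved with the uniform law on $[0,\tfrac{1}{\beta^{n}(\beta-1)}]$, so that $\int_{I_{\beta}}f_{n}=1$ for every $n$ and $f_{n}\,d\lambda\to\mu_{\beta}$ weakly, combined with comparison inequalities between $f_{n}$ and $f_{n+m}$ across scales (each $n$-prefix extends to a controlled number of $(n+m)$-prefixes of nearby points). To repair the proposal you would need to supply that cross-scale machinery in place of the local-density lemma as stated.
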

\begin{thm}
\label{Tom2}
Suppose that $$0<\int_{I_{\beta}}\underline{f}(x)dx<\infty.$$ Then $\mu_{\beta}$ is absolutely continuous with density function $$h_{\beta}(x)=\frac{\underline{f}(x)}{\int_{I_{\beta}}\underline{f}(y)dy}.$$ Conversely, if $\mu_{\beta}$ is absolutely continuous with bounded density function $h_{\beta}$ then $\underline{f}$ satisfies $$0<\int_{I_{\beta}}\underline{f}(x)dx<\infty.$$
\end{thm}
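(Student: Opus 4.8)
The plan is to use the self--similarity of $\mu_{\beta}$ to recast the quantity $\tfrac{(\beta-1)\beta^{n}}{2^{n}}|\Sigma_{\beta,n}(x)|$ as a rescaled local estimate for the level $n$ distribution, and then to let $n\to\infty$. Write $r_{n}:=\beta^{-n}/(\beta-1)$; by (\ref{prefix equation}), $[a,a+r_{n}]$ is exactly the set of $x$ for which a given level $n$ sum $a$ belongs to $\Sigma_{\beta,n}(x)$. Let $\nu_{n}:=2^{-n}\sum_{(\epsilon_{i})_{i=1}^{n}\in\{0,1\}^{n}}\delta_{\sum_{i=1}^{n}\epsilon_{i}\beta^{-i}}$ be the probability measure carrying the normalised multiplicities of the level $n$ sums. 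Splitting a $\beta$--expansion after its first $n$ digits gives $\mu_{\beta}=\nu_{n}*(S_{\beta^{-n}})_{*}\mu_{\beta}$, where $S_{t}$ denotes multiplication by $t$; since $(S_{\beta^{-n}})_{*}\mu_{\beta}$ is carried by $[0,r_{n}]$, a direct comparison yields $\mu_{\beta}([a+r_{n},b])\le\nu_{n}([a,b])\le\mu_{\beta}([a,b+r_{n}])$, and hence $\nu_{n}([a,b])\to\mu_{\beta}([a,b])$ for every $a<b$, using that $\mu_{\beta}$ is non--atomic (the standard self--similarity argument). The first step I would record is the identity $g_{n}(x):=\tfrac{(\beta-1)\beta^{n}}{2^{n}}|\Sigma_{\beta,n}(x)|=r_{n}^{-1}\nu_{n}([x-r_{n},x])$; Fubini's theorem then gives $\nu_{n}([a,b-r_{n}])\le\int_{a}^{b}g_{n}\le\nu_{n}([a-r_{n},b])$, so $\int_{a}^{b}g_{n}(x)\,dx\to\mu_{\beta}([a,b])$ for every interval $[a,b]\subseteq I_{\beta}$, and in particular $\int_{I_{\beta}}g_{n}=1$ for all $n$.

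With this in hand, one inequality in each direction is soft. As $g_{n}\ge0$, Fatou's lemma together with the limit above gives $\int_{a}^{b}\underline{f}\le\liminf_{n}\int_{a}^{b}g_{n}=\mu_{\beta}([a,b])$ for every interval, so $\mu_{\beta}\ge\underline{f}\,\lambda$ as measures on $I_{\beta}$, and $\int_{I_{\beta}}\underline{f}\le1<\infty$ unconditionally. If moreover $\int_{I_{\beta}}\underline{f}>0$, then the absolutely continuous part of $\mu_{\beta}$ has positive mass, so by the dichotomy of Jessen and Wintner $\mu_{\beta}$ is absolutely continuous, with $h_{\beta}\ge\underline{f}$ almost everywhere.

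The crux --- and what I expect to be the main obstacle --- is the matching reverse estimate, which (given the previous paragraph) reduces to the single claim that there is a constant $c>0$ with $\underline{f}\ge c\,h_{\beta}$ almost everywhere. Granting this, $\mu_{\beta}$ and $\underline{f}\,\lambda$ are comparable, hence $h_{\beta}$ is a scalar multiple of $\underline{f}$, and $\int_{I_{\beta}}h_{\beta}=1$ forces $h_{\beta}=\underline{f}/\int_{I_{\beta}}\underline{f}$; the very same estimate, now run with a bounded density, yields $\int_{I_{\beta}}\underline{f}\ge c>0$ and disposes of the converse. To establish $\underline{f}\ge c\,h_{\beta}$ I would try to show that $g_{n}$ cannot collapse below the true density along the scales $r_{n}$: once $\mu_{\beta}$ is known absolutely continuous, the Lebesgue differentiation theorem pins down $\mu_{\beta}([x-r_{n},x])/r_{n}\to h_{\beta}(x)$ for a.e. $x$, and one has to compare this with $g_{n}(x)=\nu_{n}([x-r_{n},x])/r_{n}$ through the refinement structure of the level $n$ cylinders --- in particular, since $\beta<2$ the two child cylinders of a level $n$ cylinder cover their parent, so every $n$--prefix for $x$ extends to at least one $(n+1)$--prefix for $x$, giving $g_{n+1}\ge(\beta/2)g_{n}$ pointwise and preventing consecutive--scale collapse. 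Converting this local non--collapsing, together with uniform integrability of $\{g_{n}\}$ (which follows from $g_{n}(x)\le r_{n}^{-1}\mu_{\beta}([x-r_{n},x+r_{n}])$ once $h_{\beta}$ is bounded) and, if needed, Theorem~\ref{Tom1}, into the a.e. bound $\liminf_{n}g_{n}\ge c\,h_{\beta}$ is the delicate part; I would expect it to need a stopping--time or maximal--function argument along the filtration generated by the cylinders, since mere uniform boundedness with $\int g_{n}=1$ does not by itself prevent $\liminf g_{n}=0$ almost everywhere.
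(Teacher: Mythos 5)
First, a point of order: the paper does not prove Theorem~\ref{Tom2}; it is quoted verbatim from Kempton \cite{Kempton}, so there is no in-paper proof to compare against and your attempt has to be judged on its own merits. Your setup is correct and well chosen: the identity $g_{n}(x)=r_{n}^{-1}\nu_{n}([x-r_{n},x])$, the convolution decomposition $\mu_{\beta}=\nu_{n}*(S_{\beta^{-n}})_{*}\mu_{\beta}$, the resulting convergence $\int_{a}^{b}g_{n}\to\mu_{\beta}([a,b])$ with $\int_{I_{\beta}}g_{n}=1$, and the Fatou step giving $\underline{f}\,\lambda\leq\mu_{\beta}$, hence $\int\underline{f}\leq 1$ and, via Jessen--Wintner purity, absolute continuity of $\mu_{\beta}$ whenever $\int\underline{f}>0$. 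That already yields the qualitative half of the forward implication.

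There are, however, two genuine gaps. (i) The entire remaining content --- the exact formula $h_{\beta}=\underline{f}/\int\underline{f}$ and the positivity $\int\underline{f}>0$ in the converse --- is reduced to the unproved claim $\underline{f}\geq c\,h_{\beta}$ a.e., and the mechanism you offer for it cannot work as stated: $g_{n+1}\geq(\beta/2)g_{n}$ with $\beta/2<1$ is compatible with $g_{n}\to 0$ geometrically, so it does not prevent $\liminf_{n}g_{n}=0$; the smoothing scale $r_{n}$ equals the window length, so the comparison $\mu_{\beta}([a+r_{n},b])\leq\nu_{n}([a,b])$ degenerates on $[x-r_{n},x]$ and gives no pointwise lower bound. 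You correctly flag this as the delicate point, but flagging it is not proving it. (ii) Even granting $c\,h_{\beta}\leq\underline{f}\leq h_{\beta}$, the inference ``comparable, hence a scalar multiple'' is false (take $h\equiv 1$ and $\underline{f}=\tfrac12+\tfrac12\mathbf{1}_{[0,1/2]}$ on $[0,1]$), so the stated formula for $h_{\beta}$ would still not follow. The natural repair for both the formula and the structure of the argument is the exact recursion you never write down: splitting off the first digit gives $|\Sigma_{\beta,n+1}(x)|=\sum_{\epsilon\in\{0,1\}}|\Sigma_{\beta,n}(\beta x-\epsilon)|$ (terms with $\beta x-\epsilon\notin I_{\beta}$ being zero), i.e.\ $g_{n+1}=Lg_{n}$ for the transfer operator $L$ of the IFS $y\mapsto(y+\epsilon)/\beta$ with weights $\tfrac12$. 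Superadditivity of $\liminf$ gives $\underline{f}\geq L\underline{f}$, preservation of the integral under $L$ forces $\underline{f}=L\underline{f}$ a.e.\ once $0<\int\underline{f}<\infty$, and uniqueness of the invariant probability measure of this contracting IFS (which is $\mu_{\beta}$) identifies $\underline{f}/\int\underline{f}$ as the density. This disposes of the forward direction cleanly; the converse ($\int\underline{f}>0$ when $h_{\beta}$ is bounded) still requires a separate argument --- this is where Kempton's proof does its real work --- and remains open in your proposal.
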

When $\beta\in(1,2)$ is a Garsia number, Theorem \ref{Garsia theorem} tells us that $\mu_{\beta}$ is absolutely continuous with bounded density function $h_{\beta}$. Combining Theorem \ref{Tom1} and Theorem \ref{Tom2} the following Proposition is immediate.
\begin{prop}
\label{Tom prop}
Let $\beta\in(1,2)$ be a Garsia number and $x\in I_{\beta}$ be such that $h_{\beta}(x)$ is well defined. Then there exists $K_{1}>1$ and $N(x)\in\mathbb{N}$ sufficiently large such that for all $n\geq N(x)$ $$\frac{h_{\beta}(x)}{K_{1}}\leq \frac{\beta^{n}}{2^{n}}|\Sigma_{\beta,n}(x)|\leq K_{1}h_{\beta}(x).$$ Here $K_{1}$ only depends on $\beta.$
\end{prop}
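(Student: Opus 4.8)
The plan is to combine Theorems \ref{Tom1} and \ref{Tom2} to identify $\overline{f}$ and $\underline{f}$ as fixed scalar multiples of $h_{\beta}$, and then simply unwind the definitions of $\limsup$ and $\liminf$. First, since $\beta$ is a Garsia number, Theorem \ref{Garsia theorem} tells us that $\mu_{\beta}$ is absolutely continuous with a \emph{bounded} density $h_{\beta}$. Plugging absolute continuity into Theorem \ref{Tom1} gives $0<\int_{I_{\beta}}\overline{f}(y)\,dy<\infty$ together with $h_{\beta}(x)=\overline{f}(x)/\int_{I_{\beta}}\overline{f}(y)\,dy$ for a.e. $x$. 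Plugging the boundedness of $h_{\beta}$ into the converse half of Theorem \ref{Tom2} gives $0<\int_{I_{\beta}}\underline{f}(y)\,dy<\infty$, and then the forward half of Theorem \ref{Tom2} gives $h_{\beta}(x)=\underline{f}(x)/\int_{I_{\beta}}\underline{f}(y)\,dy$ for a.e. $x$. Write $c_{1}:=\int_{I_{\beta}}\underline{f}(y)\,dy$ and $c_{2}:=\int_{I_{\beta}}\overline{f}(y)\,dy$; both lie in $(0,\infty)$ and depend only on $\beta$. Also $\mu_{\beta}$ is equivalent to Lebesgue measure (by Simon--Mauldin, since it is not singular), so $h_{\beta}>0$ a.e.

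Hence there is a set $G\subseteq I_{\beta}$ of full Lebesgue measure on which $\underline{f}(x)=c_{1}h_{\beta}(x)$, $\overline{f}(x)=c_{2}h_{\beta}(x)$, and $0<h_{\beta}(x)<\infty$; this is the set of $x$ for which ``$h_{\beta}(x)$ is well defined'' in the sense of the statement. Fix $x\in G$. By the definition of $\underline{f}(x)$ as a $\liminf$ there is $N_{1}(x)$ with $\frac{(\beta-1)\beta^{n}}{2^{n}}|\Sigma_{\beta,n}(x)|\geq \tfrac{1}{2}\underline{f}(x)=\tfrac{c_{1}}{2}h_{\beta}(x)$ for all $n\geq N_{1}(x)$, and by the definition of $\overline{f}(x)$ as a $\limsup$ there is $N_{2}(x)$ with $\frac{(\beta-1)\beta^{n}}{2^{n}}|\Sigma_{\beta,n}(x)|\leq 2\overline{f}(x)=2c_{2}h_{\beta}(x)$ for all $n\geq N_{2}(x)$. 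Setting $N(x):=\max(N_{1}(x),N_{2}(x))$ and dividing through by $\beta-1$ yields
$$\frac{c_{1}}{2(\beta-1)}\,h_{\beta}(x)\;\leq\;\frac{\beta^{n}}{2^{n}}|\Sigma_{\beta,n}(x)|\;\leq\;\frac{2c_{2}}{\beta-1}\,h_{\beta}(x)\qquad(n\geq N(x)).$$
Taking $K_{1}:=\max\bigl(2,\ \tfrac{2(\beta-1)}{c_{1}},\ \tfrac{2c_{2}}{\beta-1}\bigr)$, which is $>1$ and depends only on $\beta$, we get $\frac{h_{\beta}(x)}{K_{1}}\leq \frac{c_{1}}{2(\beta-1)}h_{\beta}(x)$ and $\frac{2c_{2}}{\beta-1}h_{\beta}(x)\leq K_{1}h_{\beta}(x)$, which is the claimed inequality.

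There is no hard analytic step here; the entire content sits in Theorems \ref{Garsia theorem}, \ref{Tom1} and \ref{Tom2}. The only point needing care is bookkeeping of null sets: $h_{\beta}$ is defined only up to a null set, so one must make sure the two representations $\overline{f}/c_{2}$ and $\underline{f}/c_{1}$ of $h_{\beta}$ are compared on one common full-measure set, and that the conclusion is then applied precisely to $x$ in that set. It is worth noting that the argument never uses $\overline{f}(x)=\underline{f}(x)$ (equivalently $c_{1}=c_{2}$); only the two one-sided comparisons with $h_{\beta}(x)$ are needed, which is why the crude factors $\tfrac{1}{2}$ and $2$ above cost nothing.
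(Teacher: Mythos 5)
Your proof is correct and follows exactly the route the paper intends: the paper simply declares the proposition ``immediate'' from Theorem \ref{Garsia theorem} combined with Theorems \ref{Tom1} and \ref{Tom2}, and your argument is a careful spelling-out of that, identifying $\underline{f}=c_{1}h_{\beta}$ and $\overline{f}=c_{2}h_{\beta}$ a.e.\ and unwinding the $\liminf$/$\limsup$. Your attention to the common full-measure set and to the need for $h_{\beta}(x)>0$ (via Simon--Mauldin) is a sensible and accurate filling of details the paper leaves implicit.
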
Proposition \ref{Tom prop} will be a vital tool when it comes to proving Theorem \ref{main theorem}.

\section{Proof of Theorem \ref{main theorem}}
\label{proof of main theorem}
Our proof of Theorem \ref{main theorem} is inspired by the work of Beresnevich \cite{BerA,Ber}. However, it is not a simple case of swapping notation where appropriate, a much more delicate argument is required.

We start by proving several technical lemmas. The following lemma is due to Garsia \cite{Gar}.

\begin{lemma}
\label{Garsia's lemma}
Let $\beta\in(1,2)$ be a Garsia number and $(\epsilon_{i})_{i=1}^{n},(\epsilon_{i}')_{i=1}^{n}\in\{0,1\}^{n}.$ If $(\epsilon_{i})_{i=1}^{n}\neq (\epsilon_{i}')_{i=1}^{n}$ then $$\Big|\sum_{i=1}^{n}\frac{\epsilon_{i}}{\beta^{i}}-\sum_{i=1}^{n}\frac{\epsilon_{i}'}{\beta^{i}}\Big|> \frac{K_{2}}{2^{n}}.$$ For some strictly positive constant $K_{2}$ that only depends on $\beta.$
\end{lemma}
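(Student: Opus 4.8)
The plan is to reduce the statement to a quantitative lower bound on a nonzero value of an integer polynomial evaluated at $\beta$, and then to exploit the algebraic structure of a Garsia number — in particular that its norm is $\pm 2$ and all conjugates have modulus strictly greater than $1$. First I would observe that
$$
\sum_{i=1}^{n}\frac{\epsilon_{i}}{\beta^{i}}-\sum_{i=1}^{n}\frac{\epsilon_{i}'}{\beta^{i}}
=\frac{1}{\beta^{n}}\sum_{i=1}^{n}(\epsilon_{i}-\epsilon_{i}')\beta^{n-i}
=\frac{P(\beta)}{\beta^{n}},
$$
where $P(x)=\sum_{i=1}^{n}(\epsilon_{i}-\epsilon_{i}')x^{n-i}\in\mathbb{Z}[x]$ has coefficients in $\{-1,0,1\}$ and is not the zero polynomial since the two digit strings differ. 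Thus it suffices to show $|P(\beta)|>K_{2}\,(\beta/2)^{n}$ for a constant $K_{2}=K_{2}(\beta)>0$; equivalently, since $\beta>1$, it suffices to bound $|P(\beta)|$ below by $K_{2}'\,2^{-n}$.

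The key step is to consider the full algebraic conjugate product. Let $\gamma_{1},\dots,\gamma_{k}$ be the conjugates of $\beta$ (so $\beta,\gamma_{1},\dots,\gamma_{k}$ are all the roots of the minimal polynomial of $\beta$, and their product — the norm of $\beta$ — has modulus $2$). The quantity $N:=P(\beta)\prod_{j=1}^{k}P(\gamma_{j})$ is, up to a fixed rational factor coming from the leading coefficient of the minimal polynomial, a symmetric function of the conjugates with integer inputs, hence (after clearing that denominator) a nonzero rational integer; in particular $|N|\geq c$ for some constant $c=c(\beta)>0$ depending only on the minimal polynomial of $\beta$, and $N\neq 0$ precisely because $P$ is nonzero and $\beta$ is not a root of $P$ (its degree-$(n-1)$ leading behaviour, combined with $\deg$ of the minimal polynomial, forces this — or one argues directly that $P(\beta)=0$ would contradict minimality for large $n$, handling small $n$ separately). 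Now I bound the conjugate factors from above: for each conjugate $\gamma_{j}$ we have $|\gamma_{j}|>1$, so
$$
|P(\gamma_{j})|\leq\sum_{i=1}^{n}|\gamma_{j}|^{n-i}\leq n\,|\gamma_{j}|^{n}\leq C_{j}\,\rho^{n}
$$
for $\rho:=\max_{j}|\gamma_{j}|$ and suitable constants; absorbing polynomial factors, $\prod_{j=1}^{k}|P(\gamma_{j})|\leq C\,\rho^{kn}$ for a constant $C$ depending only on $\beta$. Combining with $|N|\geq c$ gives
$$
|P(\beta)|\geq\frac{c}{C\,\rho^{kn}}.
$$
Finally, since $\beta$ has norm $\pm 2$, i.e. $\beta\cdot\gamma_{1}\cdots\gamma_{k}=\pm 2$, we have $\rho^{k}\geq|\gamma_{1}\cdots\gamma_{k}|=2/\beta$, but more usefully we want the reverse direction: here one uses that $|\gamma_{1}\cdots\gamma_{k}|=2/\beta$ exactly, so $\prod_j |\gamma_j| = 2/\beta$, and a sharper bound replaces $\rho^{kn}$ by $(\prod_j|\gamma_j|)^n$ up to polynomial-in-$n$ losses, which after absorbing into the constant yields $|P(\beta)|\geq K_{2}'(\beta/2)^{n}$, i.e. $|P(\beta)|/\beta^{n}\geq K_{2}/2^{n}$, as required. (Being careful here: to get the clean power $2^{-n}$ one should bound $\prod_j |P(\gamma_j)|$ using that each $|P(\gamma_j)| \le n\,|\gamma_j|^n$ and that $\prod_j |\gamma_j|^n = (2/\beta)^n$, so $\prod_j|P(\gamma_j)| \le n^k (2/\beta)^n$; then $|P(\beta)| \ge c\,n^{-k}(\beta/2)^n$, and dividing by $\beta^n$ gives $\ge c\,n^{-k} 2^{-n}$ — the polynomial factor $n^{-k}$ must then be absorbed, which works after shrinking the base slightly or, better, by noting the problem only claims a bound of the form $K_2/2^n$ with $K_2$ fixed, so one instead argues that $n^{-k}(\beta/2)^n \ge K_2 \cdot 2^{-n} \cdot (\text{something} \ge 1)$ fails for the naive $\beta/2 < 1$ — so in fact the correct and honest route is to carry the slightly weaker-looking but still correct estimate and observe $c\,n^{-k}(\beta)^n / \beta^n$... let me restate: we get $|P(\beta)/\beta^n| \ge c\, n^{-k}\, 2^{-n}$, and since the statement allows any positive $K_2$, we note $n^{-k} \ge 2^{-n}$ is false, so one genuinely needs $\beta/2$ to beat the polynomial — which it does NOT since $\beta/2<1$; the resolution is that we should not have thrown away $|\gamma_j| > 1$ so crudely, and the honest final bound is $|P(\beta)/\beta^n| \ge K_2 (\beta/2)^n / \beta^n$-type with the understanding that Garsia's original argument gives exactly the stated $K_2/2^n$ by a more careful symmetric-function estimate; I will follow Garsia's argument in \cite{Gar} for this last optimization.)

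The main obstacle is precisely this last bookkeeping: showing that the upper bound on the product of conjugate values of $P$ grows no faster than $2^{n}$ up to a constant — not merely up to a polynomial or a slightly larger exponential. This is where the hypothesis that the norm is exactly $\pm 2$ (rather than some larger integer) and that \emph{all} conjugates are \emph{outside} the unit circle (so none of them can make $|P(\gamma_j)|$ decay, and their product of moduli is pinned to $2/\beta<1$) enters essentially; for a Pisot number the same scheme would instead produce a bound like $|P(\beta)|\geq c\,\beta^{-n}$, which is far too weak, explaining why the lemma is special to Garsia numbers. The remaining steps — the polynomial algebra showing $N$ is a nonzero integer multiple of $1/(\text{leading coeff})^{k}$, and the triangle-inequality estimates — are routine.
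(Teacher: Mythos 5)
Your overall strategy is exactly the one the paper uses: write the difference as $P(\beta)/\beta^{n}$ with $P$ a nonzero polynomial with coefficients in $\{-1,0,1\}$, multiply by the conjugate values $P(\gamma_{j})$ to produce a nonzero rational integer (no leading-coefficient denominators need clearing, since $\beta$ is an algebraic integer and its minimal polynomial is monic), and convert the lower bound $|P(\beta)|\prod_{j}|P(\gamma_{j})|\geq 1$ into a lower bound on $|P(\beta)|$ by bounding each $|P(\gamma_{j})|$ from above. The gap is in this last step, and you identified it yourself but did not close it: the crude estimate $|P(\gamma_{j})|\leq n|\gamma_{j}|^{n}$ gives $\prod_{j}|P(\gamma_{j})|\leq n^{k}(2/\beta)^{n}$ and hence only $|P(\beta)|/\beta^{n}\geq c\,n^{-k}2^{-n}$. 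The factor $n^{-k}$ cannot be absorbed into a constant $K_{2}$ valid for all $n$, so this does not imply the lemma; your long parenthetical circles this point and ends by deferring the ``last optimization'' to Garsia's paper, which leaves the decisive estimate unproved.

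The fix is a one-line sharpening that uses the hypothesis $|\gamma_{j}|>1$ a second time: sum the geometric series exactly, $$|P(\gamma_{j})|\leq 1+|\gamma_{j}|+\cdots+|\gamma_{j}|^{n-1}=\frac{|\gamma_{j}|^{n}-1}{|\gamma_{j}|-1}<\frac{|\gamma_{j}|^{n}}{|\gamma_{j}|-1},$$ so the factor $n$ is replaced by the $n$-independent constant $(|\gamma_{j}|-1)^{-1}$, which is finite precisely because every conjugate lies strictly outside the unit circle. Then $\prod_{j}|P(\gamma_{j})|<(2/\beta)^{n}\prod_{j}(|\gamma_{j}|-1)^{-1}$ and the chain closes with $K_{2}=\prod_{j}(|\gamma_{j}|-1)$; this is exactly how the paper's proof proceeds. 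A secondary point: your justification that $P(\beta)\neq 0$ via degree considerations is muddled. The clean argument is that if $\beta$ satisfied a nonzero polynomial with coefficients in $\{-1,0,1\}$ and (after dividing out powers of $z$) nonzero constant term, then its monic minimal polynomial would divide that polynomial over $\mathbb{Z}$, forcing the constant term, of modulus $1$, to be divisible by $m(0)=\pm\,\mathrm{norm}(\beta)=\pm 2$, a contradiction.
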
 The proof of Lemma \ref{Garsia's lemma} is well known. However to keep our work as self contained as possible we provide a short proof.
\begin{proof}
Let $(\epsilon_{i})_{i=1}^{n},(\epsilon_{i}')_{i=1}^{n}\in\{0,1\}^{n}$ and assume $(\epsilon_{i})_{i=1}^{n}\neq (\epsilon_{i}')_{i=1}^{n}.$ We introduce the following polynomials $$P(z)=\epsilon_{1}z^{n-1}+\cdots + \epsilon_{n-1}z+\epsilon_{n}$$and  $$P'(z)=\epsilon_{1}'z^{n-1}+\cdots + \epsilon_{n-1}'z+\epsilon_{n}'.$$ Since $\beta$ is an algebraic integer with norm $\pm 2$ it satisfies no polynomials with coefficients in $\{-1,0,1\}.$ Therefore $P(\beta)-P'(\beta)\neq 0.$ Moreover, if $\gamma_{1},\ldots, \gamma_{k}$ denotes the conjugates of $\beta$ then
\begin{equation}
\label{garsia 1}
(P(\beta)-P'(\beta))\prod_{i=1}^{k} (P(\gamma_{i})- P'(\gamma_{i}))\in\mathbb{Z}\setminus\{0\}.
\end{equation} Taking the absolute value of (\ref{garsia 1}) and applying a trivial lower bound, we see that (\ref{garsia 1}) implies the following inequalities
\begin{align*}
1&\leq \Big| (P(\beta)-P'(\beta))\prod_{i=1}^{k} (P(\gamma_{i})- P'(\gamma_{i}))\Big|\\
&\leq \Big| P(\beta)-P'(\beta)\Big|\prod_{i=1}^{k}(1+|\gamma_{i}|+\cdots +|\gamma_{i}^{n-1}|)\\
&< \Big| P(\beta)-P'(\beta)\Big|\prod_{i=1}^{k}\frac{|\gamma_{i}^{n}|}{|\gamma_{i}|-1}\\
&\leq \Big|P(\beta)-P'(\beta)\Big|\frac{2^{n}}{\beta^{n}}\prod_{i=1}^{k} \frac{1}{|\gamma_{i}|-1}\\
&= 2^{n}\Big|\sum_{i=1}^{n}\frac{\epsilon_{i}}{\beta^{i}}-\sum_{i=1}^{n}\frac{\epsilon_{i}'}{\beta^{i}}\Big|\prod_{i=1}^{k} \frac{1}{|\gamma_{i}|-1}.
\end{align*} Which implies the required lower bound. In the above we have used the fact $\beta^{n} \prod_{i=1}^{k}|\gamma_{i}|^{n}=2^{n}.$ This follows from the fact that the norm of $\beta$ is $\pm 2.$
\end{proof}
Recall the Lebesgue differentiation theorem. This theorem states that if $f\in L^{1}(\mathbb{R})$ then for almost every $x\in\mathbb{R}$ the following holds
\begin{equation}
\label{Leb dif}
\lim_{r\to 0}\frac{1}{2r}\int_{B_{r}(x)}f(y)d\lambda(y)=f(x).
\end{equation}Here $B_{r}(x)$ denotes the closed interval centred at $x$ with radius $r.$ Given $f\in
L^{1}(\mathbb{R}),$ we call any $x\in\mathbb{R}$ satisfying (\ref{Leb dif}) a \emph{Lebesgue differentiation point for $f.$} The Lebesgue differentiation theorem tells us that given $f\in L^{1}(\mathbb{R}),$ almost every $x\in\mathbb{R}$ is a Lebesgue differentiation point for $f.$ With this theorem in mind we establish the following lemma.

\begin{lemma}
\label{differentiation lemma}
Let $\beta\in(1,2)$ be a Garsia number, and let $x\in I_{\beta}$ be a Lebesgue differentiation point for $h_{\beta}$ satisfying $h_{\beta}(x)>0.$ Let $r^{*}(x)$ be such that $$ \frac{h_{\beta}(x)}{2}\leq\frac{1}{2r}\int_{B_{r}(x)}h_{\beta}(y)d\lambda(y)$$ for all $r\in(0,r^{*}(x))$.
Then there exists $L\in\mathbb{N}$ and $\kappa\in (1,2)$ such that for all $r\in(0,r^{*}(x))$ the following inequality holds $$\lambda\Big(\Big\{y\in B_{r}(x): h_{\beta}(y)\leq \frac{1}{L}\Big\}\Big)\leq \kappa r.$$ Moreover, $L$ and $\kappa$ only depend upon $\beta$ and $x$.
\end{lemma}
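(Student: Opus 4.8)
The plan is to pit the uniform upper bound on the density $h_{\beta}$ coming from Theorem \ref{Garsia theorem} against the lower bound on the average of $h_{\beta}$ over $B_{r}(x)$ that is built into the definition of $r^{*}(x)$. Intuitively, if the ``bad'' set $\{y\in B_{r}(x): h_{\beta}(y)\le 1/L\}$ occupied almost all of $B_{r}(x)$, then $\int_{B_{r}(x)}h_{\beta}\,d\lambda$ would be forced to be too small, since outside the bad set $h_{\beta}$ contributes at most $M$ per unit length, where $M$ is the Garsia upper bound.

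First I would fix the constants. Write $M:=\tfrac{2}{\prod_{i=1}^{k}(\gamma_{i}-1)}$ for the bound furnished by Theorem \ref{Garsia theorem}, so that $h_{\beta}\le M$ Lebesgue-almost everywhere; since $x$ is a Lebesgue differentiation point for $h_{\beta}$, we also get $h_{\beta}(x)=\lim_{r\to 0}\tfrac{1}{2r}\int_{B_{r}(x)}h_{\beta}\,d\lambda\le M$. Now choose $L\in\mathbb{N}$ large enough that $L>1/M$ and $L>2/h_{\beta}(x)$; both bounds are finite because $h_{\beta}(x)>0$. Set $\kappa:=\tfrac{2M-h_{\beta}(x)}{M-1/L}$. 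A short elementary computation, using $0<h_{\beta}(x)\le M$ together with $L>1/M$ and $L>2/h_{\beta}(x)$, shows that $M-1/L>0$ and $1<\kappa<2$.

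Next I would carry out the measure estimate. Fix $r\in(0,r^{*}(x))$ and set $E_{r}:=\{y\in B_{r}(x): h_{\beta}(y)\le 1/L\}$, which is measurable since $h_{\beta}\in L^{1}(\mathbb{R})$. Splitting the integral over $B_{r}(x)$ into its part over $E_{r}$ and over $B_{r}(x)\setminus E_{r}$, using $h_{\beta}\le 1/L$ on the former and $h_{\beta}\le M$ on the latter, and recalling $\lambda(B_{r}(x))=2r$, gives
\[\int_{B_{r}(x)}h_{\beta}\,d\lambda\ \le\ \frac{1}{L}\lambda(E_{r})+M\bigl(2r-\lambda(E_{r})\bigr).\]
On the other hand, the defining property of $r^{*}(x)$ yields $\int_{B_{r}(x)}h_{\beta}\,d\lambda\ge r\,h_{\beta}(x)$. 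Combining these two inequalities and rearranging gives $\bigl(M-1/L\bigr)\lambda(E_{r})\le\bigl(2M-h_{\beta}(x)\bigr)r$, whence $\lambda(E_{r})\le\kappa r$, which is precisely the desired bound. Since $L$ and $\kappa$ were defined purely in terms of $M$ (which depends only on $\beta$) and of $h_{\beta}(x)$ (which depends only on $\beta$ and $x$), the dependence claim also holds.

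I do not expect a genuine obstacle here: the only points needing any care are the measurability of $E_{r}$ (immediate from $h_{\beta}\in L^{1}$), the inequality $h_{\beta}(x)\le M$ (which is where the Lebesgue-point hypothesis enters), and the verification that the constant $\kappa$ produced by the estimate lies in the open interval $(1,2)$, which is a one-line check once $L$ has been taken large enough.
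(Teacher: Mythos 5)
Your proposal is correct and follows essentially the same route as the paper: split $\int_{B_{r}(x)}h_{\beta}\,d\lambda$ over the bad set and its complement, bound the two pieces by $1/L$ and by the Garsia upper bound respectively, and rearrange against the lower bound $r\,h_{\beta}(x)$ coming from the definition of $r^{*}(x)$, yielding $\kappa=\tfrac{2M-h_{\beta}(x)}{M-1/L}$. The only (cosmetic) difference is that you pin down $L$ by the explicit conditions $L>2/h_{\beta}(x)$ and $L>1/M$ and verify $\kappa\in(1,2)$ directly, whereas the paper obtains the same conclusion by letting $L\to\infty$ and noting the limit $\tfrac{2M-h_{\beta}(x)}{M}$ lies in $(1,2)$; your version is, if anything, slightly more careful about the edge case $h_{\beta}(x)=M$.
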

\begin{proof}
Fix $\beta$ and $x$ that satisfy the hypothesis of the lemma. We begin by relabelling the upper bound for the density provided by Theorem \ref{Garsia theorem}. Let $$C:=\frac{2}{\prod_{i=1}^{k} (\gamma_{i}-1)}$$ where $\gamma_{1},\ldots, \gamma_{k}$ are the conjugates of $\beta$. To each $L\in\mathbb{N}$ we associate $$A_{L}:=\Big\{y\in B_{r}(x): h_{\beta}(y)\leq \frac{1}{L}\Big\}.$$  For $r\in(0,r^{*}(x))$ the following inequalities hold from the trivial estimates
\begin{align}
\frac{h_{\beta}(x)}{2}&\leq \frac{1}{2r}\Big(\int_{A_{L}}h_{\beta}(y)d\lambda(y)+\int_{B_{r}(x)\setminus A_{L}}h_{\beta}(y)d\lambda(y)\Big)\nonumber \\
&\leq \frac{1}{2r}\Big(\frac{1}{L}\lambda(A_{L})+ (2r-\lambda(A_{L}))C\Big)\label{First inequality}.
\end{align}Manipulating (\ref{First inequality}) yields
\begin{equation}
\label{Second inequality}
\lambda(A_{L})\Big(C-\frac{1}{L}\Big)\leq r(2C-h_{\beta}(x)).
\end{equation}We may assume that $L\in\mathbb{N}$ is sufficiently large that $C-L^{-1}>0.$ In which case
\begin{equation}
\label{measure AL}
\lambda(A_{L})\leq r\Big(\frac{2C-h_{\beta}(x)}{C-1/L}\Big).
\end{equation}As $L\to\infty$ it is obvious that $$\frac{2C-h_{\beta}(x)}{C-1/L}\to \frac{2C-h_{\beta}(x)}{C}.$$ Since $(2C-h_{\beta}(x))C^{-1}\in(1,2),$ we deduce that there exists $L\in\mathbb{N}$ and $\kappa\in(1,2)$ such that for all $r\in(0,r^{*}(x))$ we have $\lambda(A_{L})\leq \kappa r$. Moreover, both $L$ and $\kappa$ only depend upon $x$ and $\beta.$
\end{proof}
We also make use of the following lemma due to Chung and Erd\H{o}s \cite{ChEr}.
\begin{lemma}
\label{Erdos lemma}
Let $(E_{n})_{n=1}^{\infty}$ be a sequence of measurable sets contained in a bounded interval. If the sum $\sum_{n=1}^{\infty}\lambda(E_{n})=\infty$, then we have $$\lambda(\limsup_{n\to\infty}E_{n})\geq \limsup_{k\to\infty}\frac{(\sum_{n=1}^{k}\lambda(E_{n}))^{2}}{\sum_{n=1}^{k}\sum_{m=1}^{k}\lambda(E_{n}\cap E_{m})}.$$
\end{lemma}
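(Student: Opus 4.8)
The plan is to establish the finite-stage estimate by Cauchy--Schwarz and then promote it to the $\limsup$ set through a truncation of the index range. First I would fix $N\in\mathbb{N}$ and, for $k>N$, work with the counting function $g_{N,k}:=\sum_{n=N}^{k}\mathbf{1}_{E_{n}}$. Because all the $E_{n}$ lie in one bounded interval, $g_{N,k}$ is a bounded function with finite-measure support, hence $g_{N,k}\in L^{1}\cap L^{2}$, with
$$\int g_{N,k}\,d\lambda=\sum_{n=N}^{k}\lambda(E_{n})\quad\text{and}\quad \int g_{N,k}^{2}\,d\lambda=\sum_{n=N}^{k}\sum_{m=N}^{k}\lambda(E_{n}\cap E_{m}).$$
Since $g_{N,k}$ vanishes outside $\bigcup_{n=N}^{k}E_{n}$, the Cauchy--Schwarz inequality $\big(\int f\,d\lambda\big)^{2}\le \lambda(\operatorname{supp}f)\int f^{2}\,d\lambda$ applied to $f=g_{N,k}$ yields
$$\Big(\sum_{n=N}^{k}\lambda(E_{n})\Big)^{2}\le \lambda\Big(\bigcup_{n=N}^{k}E_{n}\Big)\sum_{n=N}^{k}\sum_{m=N}^{k}\lambda(E_{n}\cap E_{m}).$$
Here one should first note that $\sum_{n}\lambda(E_{n})=\infty$ forces $\sum_{n=1}^{k}\sum_{m=1}^{k}\lambda(E_{n}\cap E_{m})\ge \sum_{n=1}^{k}\lambda(E_{n})>0$ for $k$ large, so the ratios appearing below make sense.

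The second step is to enlarge and relax. Writing $U_{N}:=\bigcup_{n=N}^{\infty}E_{n}$, $S_{k}:=\sum_{n=1}^{k}\lambda(E_{n})$ and $T_{k}:=\sum_{n=1}^{k}\sum_{m=1}^{k}\lambda(E_{n}\cap E_{m})$, and using $\bigcup_{n=N}^{k}E_{n}\subseteq U_{N}$ together with $\sum_{n=N}^{k}\sum_{m=N}^{k}\lambda(E_{n}\cap E_{m})\le T_{k}$, the previous display gives
$$\lambda(U_{N})\ge \frac{(S_{k}-S_{N-1})^{2}}{T_{k}}\qquad\text{for every }k>N.$$
Now I would let $k\to\infty$ with $N$ fixed: since $S_{k}\to\infty$ we have $S_{k}-S_{N-1}=S_{k}\big(1-S_{N-1}/S_{k}\big)$ with $S_{N-1}/S_{k}\to 0$, so $\limsup_{k\to\infty}(S_{k}-S_{N-1})^{2}/T_{k}=\limsup_{k\to\infty}S_{k}^{2}/T_{k}$, whence $\lambda(U_{N})\ge \limsup_{k\to\infty}S_{k}^{2}/T_{k}$ for every $N$.

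Finally, $\limsup_{n\to\infty}E_{n}=\bigcap_{N\ge1}U_{N}$, the $U_{N}$ are nested decreasing, and $\lambda(U_{1})<\infty$ by the bounded-interval hypothesis, so continuity of measure from above gives $\lambda(\limsup_{n\to\infty}E_{n})=\inf_{N}\lambda(U_{N})\ge \limsup_{k\to\infty}S_{k}^{2}/T_{k}$, which is exactly the assertion. The one point needing care — and the only genuine obstacle — is that Cauchy--Schwarz naturally bounds the measure of the \emph{union} $\bigcup_{n=1}^{k}E_{n}$ rather than that of the $\limsup$ set; the truncation trick above resolves this, the crucial observation being that advancing the lower index from $1$ to any fixed $N$ does not alter $\limsup_{k\to\infty}S_{k}^{2}/T_{k}$ because $S_{k}\to\infty$.
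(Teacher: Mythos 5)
Your proof is correct. The paper itself does not prove this lemma; it is quoted verbatim with a citation to Chung and Erd\H{o}s \cite{ChEr}, so there is no in-paper argument to compare against. What you give is the standard second-moment proof: the Cauchy--Schwarz estimate $\big(\int g\,d\lambda\big)^{2}\leq \lambda(\{g\neq 0\})\int g^{2}\,d\lambda$ applied to the truncated counting function $g_{N,k}=\sum_{n=N}^{k}\mathbf{1}_{E_{n}}$, followed by the observation that dropping the first $N-1$ terms does not change $\limsup_{k\to\infty}S_{k}^{2}/T_{k}$ because $S_{k}\to\infty$, and then continuity of measure from above applied to the nested sets $U_{N}=\bigcup_{n\geq N}E_{n}$ (legitimate here since $\lambda(U_{1})<\infty$ by the bounded-interval hypothesis). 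All the delicate points are handled: positivity of $T_{k}$ for large $k$ via the diagonal terms, the enlargement of the inner double sum from the range $[N,k]^{2}$ to $[1,k]^{2}$ being in the harmless direction, and the fact that multiplying by a sequence tending to $1$ preserves the $\limsup$ even when it is infinite. Nothing is missing.
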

We are now in a position to give our proof of Theorem \ref{main theorem}.
\begin{proof}[Proof of Theorem \ref{main theorem}]
The proof of Theorem \ref{main theorem} depends on an application of the Lebesgue density theorem. The Lebesgue density theorem states that if $E\subseteq \mathbb{R}$ is a measurable set, then for almost every $x\in E$ the following holds $$\lim_{r\to 0}\frac{\lambda(E\cap B_{r}(x))}{2r}=1.$$ As a consequence of the Lebesgue density theorem, to show that $W_{\beta}(\Psi)$ is a set of full measure within $I_{\beta},$ it suffices to show that for almost every $x\in I_{\beta}$ there exists $\delta>0$ such that
\begin{equation}
\label{Lebesgue density equation}
\lambda(W_{\beta}(\Psi)\cap B_{r}(x))\geq \delta r.
\end{equation} For all $r$ sufficiently small. Here $\delta$ is allowed to depend on $x$ but is not allowed to depend on $r$. This will be the strategy we employ to show $W_{\beta}(\Psi)$ is of full measure. It is worth noting that the Lebesgue density theorem is simply the Lebesgue differentiation theorem when $f$ is the indicator function on $E.$

For the rest of the proof we fix $x\in I_{\beta}$. We only need to show that (\ref{Lebesgue density equation}) holds for almost every $x\in I_{\beta}.$ We may therefore assume without loss of generality that: $h_{\beta}(x)$ exists, $h_{\beta}(x)>0$, and $x$ is a Lebesgue differentiation point for $h_{\beta}.$ In which case, both Proposition \ref{Tom prop} and Lemma \ref{differentiation lemma} can be applied. The fact that we can take $h_{\beta}(x)>0$ is a consequence of the aforementioned work of Simon and Mauldin \cite{SimMau}, who showed that if $\mu_{\beta}$ is absolutely continuous with respect to the Lebesgue measure then it is in fact equivalent to the Lebesgue measure.

For ease of exposition we break what remains of our proof into three parts.
\\

\noindent $\textbf{(1)}$ \textbf{Replacing $\Psi$ with $\tilde{\Psi}$.}
\\ \\
Let $K_{2}$ be as in Lemma \ref{Garsia's lemma}. So for $(\epsilon_{i})_{i=1}^{n}\neq (\epsilon_{i}')_{i=1}^{n}$ then
\begin{equation}
\label{separation equation}
\Big|\sum_{i=1}^{n}\frac{\epsilon_{i}}{\beta^{i}}-\sum_{i=1}^{n}\frac{\epsilon_{i}'}{\beta^{i}}\Big|> \frac{K_{2}}{2^{n}}.
\end{equation}
Let $\tilde{\Psi}(n)=\min\{\Psi(n),K_{2}2^{-n}\}$ then $\sum_{n=1}^{\infty} 2^{n}\tilde{\Psi}(n)=\infty.$ To see why $\sum_{n=1}^{\infty} 2^{n}\tilde{\Psi}(n)=\infty$ we remark that if $\sum_{n=1}^{\infty} 2^{n}\tilde{\Psi}(n)<\infty$ then there must exist infinitely many $n\in\mathbb{N}$ for which $\tilde{\Psi}(n)=K_{2}2^{-n}.$ This is a consequence of $\sum_{n=1}^{\infty}2^{n}\Psi(n)$ diverging. However, this implies that for infinitely many $n\in\mathbb{N}$ the term $2^{n}\tilde{\Psi}(n)$ equals $K_{2},$ and as $K_{2}>0$ the sum must diverge.

Clearly $W_{\beta}(\tilde{\Psi})\subseteq W_{\beta}(\Psi).$ Therefore, to show that (\ref{Lebesgue density equation}) holds and $W_{\beta}(\Psi)$ is a set of full measure within $I_{\beta}$, it is sufficient to show that the following analogue of (\ref{Lebesgue density equation}) holds for some $\delta>0$ and for all $r$ sufficiently small
\begin{equation}
\label{Lebesgue density equation 2}
\lambda(W_{\beta}(\tilde{\Psi})\cap B_{r}(x))\geq \delta r.
\end{equation}

The important feature of our new function $\tilde{\Psi}$ is that (\ref{separation equation}) implies that for $(\epsilon_{i})_{i=1}^{n}\neq (\epsilon_{i}')_{i=1}^{n}$ we have
\begin{equation}
\label{separation equation 2} \Big[\sum_{i=1}^{n}\frac{\epsilon_{i}}{\beta^{i}},\sum_{i=1}^{n}\frac{\epsilon_{i}}{\beta^{i}}+\tilde{\Psi}(n)\Big]\bigcap\Big[\sum_{i=1}^{n}\frac{\epsilon_{i}'}{\beta^{i}},\sum_{i=1}^{n}\frac{\epsilon_{i}'}{\beta^{i}}+\tilde{\Psi}(n)\Big]=\emptyset.
\end{equation}This observation will prove useful later on in our proof.
\\

\noindent $\textbf{(2)}$ \textbf{Construction of the $E_{n}$.}
\\ \\
Let $r\in(0,r^{*}(x))$ and $L\in\mathbb{N}$ be as in Lemma \ref{differentiation lemma}. Let $$B_{L}:=\Big\{y\in B_{r}(x): h_{\beta}(y)\geq \frac{1}{L}\Big\}.$$ Lemma \ref{differentiation lemma} tells us that $\lambda(B_{L})\geq \omega r$ where $\omega:=2-\kappa>0.$ Importantly $\omega$ only depends upon $\beta$ and $x$. 

Proposition \ref{Tom prop} tells us that for almost every $y\in I_{\beta}$ there exists $N(y)\in\mathbb{N}$ sufficiently large that
\begin{equation}
\label{growth bounds 1}
\frac{h_{\beta}(y)}{K_{1}}\leq \frac{\beta^{n}}{2^{n}}|\Sigma_{\beta,n}(y)|\leq h_{\beta}(y)K_{1}.
\end{equation} for all $n\geq N(y).$ Using the upper bound for the density provided by Theorem \ref{Garsia theorem}, we see that for almost every $y\in B_{L}$ there exists $N(y)\in\mathbb{N}$ such that
\begin{equation}
\label{growth bounds 2}
\frac{1}{LK_{1}}\leq \frac{\beta^{n}}{2^{n}}|\Sigma_{\beta,n}(y)|\leq \frac{2K_{1}}{\prod_{i=1}^{k}(\gamma_{i}-1)}.
\end{equation} for all $n\geq N(y).$ Now let us take $N^{*}\in\mathbb{N}$ to be sufficiently large that
\begin{equation}
\label{inequality 1}
\lambda\Big(\Big\{y\in B_{L}: \frac{1}{LK_{1}}\leq \frac{\beta^{n}}{2^{n}}|\Sigma_{\beta,n}(y)|\leq \frac{2K_{1}}{\prod_{i=1}^{k}(\gamma_{i}-1)} \textrm{ for all } n\geq N^{*}\Big\}\Big)\geq \frac{\omega r}{2}.
\end{equation}Throughout our proof $N^{*}$ is allowed to depend on $r$. Let $$C:=\Big\{y\in B_{L}: \frac{1}{LK_{1}}\leq \frac{\beta^{n}}{2^{n}}|\Sigma_{\beta,n}(y)|\leq \frac{2K_{1}}{\prod_{i=1}^{k}(\gamma_{i}-1)} \textrm{ for all } n\geq N^{*}\Big\}.$$ Upon relabelling, any $y\in C$ satisfies
\begin{equation}
\label{growth bounds 3}
\frac{1}{K_{3}}\leq \frac{\beta^{n}}{2^{n}}|\Sigma_{\beta,n}(y)|\leq K_{3}
\end{equation} for all $n\geq N^{*}$. Where $K_{3}$ is some positive constant depending only upon $\beta$ and $x$. Importantly $K_{3}$ does not depend on $r.$

We now focus our attention on the interval $B_{r}(x).$ Fix $n\geq N^{*}$ where $N^{*}$ is as above. We now fill $B_{r}(x)$ with closed intervals satisfying certain desirable properties. We may pick a set of closed intervals satisfying the following:
\begin{itemize}
\item Each interval is of width $(\beta^{n}(\beta-1))^{-1}.$
  \item Each of these intervals are strictly contained in $B_{r}(x).$
  \item If they intersect it is only at a shared endpoint.
  \item They cover all of $B_{r}(x)$ except for a set of measure at most $\omega r/4.$
\end{itemize}
To assert that a set of intervals satisfying this covering property exist, it is necessary to assume that $N^{*}$ is sufficiently large. This is permissible as $N^{*}$ is allowed to depend on $r$. Let $\{I_{j}^{n}\}$ denote a set of intervals satisfying the above properties. It is a consequence of (\ref{inequality 1}) and the above properties that
\begin{equation}
\label{measure inequality}
\lambda\Big(\bigcup_{j} I_{j}^{n}\cap C\Big)\geq \frac{\omega r}{4}.
\end{equation}Without loss of generality, we may assume that the enumeration of the set $\{I_{j}^{n}\}$ is such that $I_{1}^{n}$ is the leftmost interval, then $I_{2}^{n}$ sits immediately to the right of $I_{1}^{n}$, then $I_{3}^{n}$ sits immediately to the right of $I_{2}^{n}$, and so on. This implies that for any two distinct intervals in $\{I_{j}^{n}\}$ whose subscript have the same parity, there is at least one interval of size $(\beta^{n}(\beta-1))^{-1}$ sitting between them. We partition $\{I_{j}^{n}\}$ into two subsets, those with an odd subscript $\{I_{j, odd}^{n}\}$ and those with an even subscript $\{I_{j,even}^{n}\}.$ It is a consequence of (\ref{measure inequality}) that $$\lambda\Big(\bigcup_{j}I_{j,odd}^{n}\cap C\Big)\geq \frac{\omega r}{8}\textrm{ or }\lambda\Big(\bigcup_{j}I_{j,even}^{n}\cap C\Big)\geq \frac{\omega r}{8}.$$ Without loss of generality we assume that $\lambda(\bigcup I_{j,odd}^{n}\cap C)\geq \frac{\omega r}{8}$. Let $$J:=\{I_{j,odd}^{n}: int(I_{j,odd}^{n})\cap C\neq \emptyset\}.$$
Each $I_{j,odd}^{n}$ is of width $(\beta^{n}(\beta-1))^{-1},$ therefore $$|J|\geq \Big[\frac{\beta^{n}(\beta-1)\omega r}{8}\Big].$$ We pick a subset of $J$ with cardinality precisely $[\frac{\beta^{n}(\beta-1)\omega r}{8}].$ Abusing notation we also denote this set by $J$.

For each $I_{j,odd}^{n}\in J$ we choose a point $\alpha_{j}^{n}\in int(I_{j,odd}^{n})\cap C$. Since $|J|=[\frac{\beta^{n}(\beta-1)\omega r}{8}]$ we have
\begin{equation}
\label{card 1}
|\{\alpha_{j}^{n}\}|=\Big[\frac{\beta^{n}(\beta-1)\omega r}{8}\Big].
\end{equation}For each $\alpha_{j}^{n},$ let $\{\nu_{s,j}^{n}\}$ denote the set of $n$-prefixes $\Sigma_{\beta,n}(\alpha_{j}^{n}).$ We are now in a position to define the set $E_{n}$. Let
\begin{equation}
\label{E_{n}}
E_{n}:=\bigcup_{\alpha_{j}^{n}}\bigcup_{\nu_{s,j}^{n}\in\Sigma_{\beta,n}(\alpha_{j}^{n})}[\nu_{s,j}^{n},\nu_{s,j}^{n}+\tilde{\Psi}(n)].
\end{equation}

For distinct $\alpha_{j}^{n},\alpha_{j'}^{n}$ we have $|\alpha_{j}^{n}-\alpha_{j'}^{n}|>(\beta^{n}(\beta-1))^{-1}.$ This is because $\alpha_{j}^{n}$ and $\alpha_{j'}^{n}$ are in the interior of distinct $I_{j}^{n}$ and $I_{j'}^{n},$ where $j$ and $j'$ have the same parity. Recall that it is as a consequence of our construction that for any two intervals of the same parity there exists an interval of width $(\beta^{n}(\beta-1))^{-1}$ sitting between them. By (\ref{prefix equation}) each element of $\Sigma_{\beta,n}(\alpha_{j}^{n})$ is contained in $[\alpha_{j}^{n}-\frac{1}{\beta^{n}(\beta-1)},\alpha_{j}],$ and similarly each element of $\Sigma_{\beta,n}(\alpha_{j'}^{n})$ is contained in $[\alpha_{j'}^{n}-\frac{1}{\beta^{n}(\beta-1)},\alpha_{j'}^{n}]$. Therefore $\Sigma_{\beta,n}(\alpha_{j}^{n})\cap \Sigma_{\beta,n}(\alpha_{j'}^{n})=\emptyset,$ and by (\ref{separation equation 2}) we may conclude that any two distinct intervals $[\nu_{s,j}^{n},\nu_{s,j}^{n}+\tilde{\Psi}(n)]$ and $[\nu_{s',j'}^{n},\nu_{s',j'}^{n}+\tilde{\Psi}(n)]$ appearing in (\ref{E_{n}}) are disjoint.
Making use of this fact, along with (\ref{growth bounds 3}) and (\ref{card 1}) we observe the following inequalities
\begin{equation}
\label{E_{n} measure}
\Big[\frac{\beta^{n}(\beta-1)\omega r}{8}\Big]\frac{2^{n}}{\beta^{n}K_{3}}\tilde{\Psi}(n)\leq \lambda(E_{n})\leq \Big[\frac{\beta^{n}(\beta-1)\omega r}{8}\Big]\frac{2^{n}K_{3}}{\beta^{n}}\tilde{\Psi}(n).
\end{equation}
It is clear that (\ref{E_{n} measure}) implies
\begin{equation}
\label{E_{n} measure 2}
\frac{2^{n}r}{K_{4}}\tilde{\Psi}(n)\leq \lambda(E_{n})\leq 2^{n}rK_{4}\tilde{\Psi}(n),
\end{equation}for some positive constant $K_{4}$ that only depends upon $\beta$ and $x$.

Clearly $\limsup_{n\to\infty} E_{n}\subset W_{\beta}(\tilde{\Psi})\cap B_{r}(x).$ Therefore to show that there exists $\delta>0$ for which (\ref{Lebesgue density equation 2}) holds, it suffices to show that there exists $\delta>0$ such that
\begin{equation}
\label{Lebesgue density 2}
\lambda(\limsup_{n\to\infty}E_{n})\geq \delta r.
\end{equation}
Equation (\ref{E_{n} measure 2}) and our divergence assumption implies $\sum_{n=N^{*}}^{\infty}\lambda(E_{n})=\infty$. Therefore we can apply Lemma \ref{Erdos lemma}. In the next part of our proof we obtain a lower bound for  $\lambda(\limsup_{n\to\infty} E_{n})$ using Lemma \ref{Erdos lemma}. As we will see this lower bound yields a $\delta$ so that we satisfy (\ref{Lebesgue density 2}).
\\

\noindent $\textbf{(3)}$ \textbf{Applying Lemma \ref{Erdos lemma} to $E_{n}$.}
\\ \\
To begin with, let $M_{0}\in\mathbb{N}$ be sufficiently large that
\begin{equation}
\label{>1 equation}
\sum_{n=N^{*}}^{M_{0}}2^{n}\tilde{\Psi}(n)>1.
\end{equation}Let $m,n\geq N^{*}.$ For any $\nu_{s,j}^{m},$ the number of $\nu_{s',j'}^{n}$ whose corresponding interval $[\nu_{s',j'}^{n},\nu_{s',j'}^{n}+\tilde{\Psi}(n)]$ may intersect $[\nu_{s,j}^{m},\nu_{s,j}^{m}+\tilde{\Psi}(m)]$ is at most $$2+\frac{\tilde{\Psi}(m)}{K_{2}2^{-n}}=2+\frac{2^{n}\tilde{\Psi}(m)}{K_{2}},$$ by Lemma \ref{Garsia's lemma}. Therefore
\begin{equation}
\label{single intersection}
\lambda\Big(E_{n}\cap [\nu_{s,j}^{m},\nu_{s,j}^{m}+\tilde{\Psi}(m)]\Big)\leq \tilde{\Psi}(n)\Big(2+\frac{2^{n}\tilde{\Psi}(m)}{K_{2}}\Big).
\end{equation} Applying (\ref{growth bounds 3}) and (\ref{card 1}) it is clear that $$\Big|\bigcup_{\alpha_{j}^{m}}\Sigma_{\beta,m}(\alpha_{j}^{m})\Big|\leq \Big[\frac{\beta^{m}(\beta-1)\omega r}{8}\Big] \frac{2^{m}}{\beta^{m}}K_{3}.$$
Therefore
\begin{equation}
\label{card 2}
\Big|\bigcup_{\alpha_{j}^{m}}\Sigma_{\beta,m}(\alpha_{j}^{m})\Big|\leq 2^{m}rK_{5}.
\end{equation} Where $K_{5}$ is some positive constant depending only on $\beta$ and $x$. Combining (\ref{single intersection}) with (\ref{card 2}) we obtain the following bound
\begin{equation}
\label{big intersection}
\lambda(E_{n}\cap E_{m})\leq 2^{m}rK_{5}\Big(\tilde{\Psi}(n)\Big(2+\frac{2^{n}\tilde{\Psi}(m)}{K_{2}}\Big)\Big)\leq 2rK_{5}\Big(2^{m}\tilde{\Psi}(n)+\frac{2^{n+m}\tilde{\Psi}(n)\tilde{\Psi}(m)}{K_{2}}\Big).
\end{equation}
We now give an upper bound for the double summation appearing in the denominator in Lemma \ref{Erdos lemma}. First of all we split up the terms in this summation
\begin{equation}
\label{double sum}
\sum_{n=N^{*}}^{M_{0}}\sum_{m=N^{*}}^{M_{0}}\lambda(E_{n}\cap E_{m})=\sum_{n=N^{*}}^{M_{0}}\lambda(E_{n})+2\sum_{n=N^{*}+1}^{M_{0}}\sum_{m=N^{*}}^{n-1}\lambda(E_{n}\cap E_{m}).
\end{equation} By (\ref{E_{n} measure 2}) and (\ref{>1 equation}) we obtain
\begin{equation}
\label{first bound}
\sum_{n=N^{*}}^{M_{0}}\lambda(E_{n})\leq rK_{4} \sum_{n=N^{*}}^{M_{0}}2^{n}\tilde{\Psi}(n)\leq rK_{4} \Big(\sum_{n=N^{*}}^{M_{0}}2^{n}\tilde{\Psi}(n)\Big)^{2}
\end{equation}
As a consequence of (\ref{big intersection}) we obtain
\begin{equation}
\label{second bound}
\sum_{n=N^{*}+1}^{M_{0}}\sum_{m=N^{*}}^{n-1}\lambda(E_{n}\cap E_{m})\leq 2rK_{5}\sum_{n=N^{*}+1}^{M_{0}}\sum_{m=N^{*}}^{n-1}\Big(2^{m}\tilde{\Psi}(n)+\frac{2^{n+m}\tilde{\Psi}(n)\tilde{\Psi}(m)}{K_{2}}\Big).
\end{equation}We now split the summation in (\ref{second bound}) into two summations. For the first summation we have the following bound
\begin{equation}
\label{third bound}
\sum_{n=N^{*}+1}^{M_{0}}\sum_{m=N^{*}}^{n-1}2^{m}\tilde{\Psi}(n)\leq \sum_{n=N^{*}+1}^{M_{0}}2^{n}\tilde{\Psi}(n)\leq \Big(\sum_{n=N^{*}}^{M_{0}}2^{n}\tilde{\Psi}(n)\Big)^{2}.
\end{equation} For the second summation in (\ref{second bound}) we observe
\begin{equation}
\label{forth bound}
\sum_{n=N^{*}+1}^{M_{0}}\sum_{m=N^{*}}^{n-1}2^{n+m}\tilde{\Psi}(n)\tilde{\Psi}(m)\leq \Big(\sum_{n=N^{*}}^{M_{0}}2^{n}\tilde{\Psi}(n)\Big)^{2}.
\end{equation}Combining (\ref{E_{n} measure 2}), (\ref{double sum}), (\ref{first bound}), (\ref{second bound}), (\ref{third bound}) and (\ref{forth bound}) we obtain
\begin{equation}
\label{conclusion equation}
\frac{\Big(\sum_{n=N^{*}}^{M_{0}}\lambda(E_{n})\Big)^{2}}{\sum_{n=N^{*}}^{M_{0}}\sum_{m=N^{*}}^{M_{0}}\lambda(E_{n}\cap E_{m})}\geq \frac{r^{2}K_{4}^{-2}\Big(\sum_{n=N^{*}}^{M_{0}}2^{n}\tilde{\Psi}(n)\Big)^{2}}{r(K_{4}+ 4K_{5}+4K_{2}^{-1}K_{5})\Big(\sum_{n=N^{*}}^{M_{0}}2^{n}\tilde{\Psi}(n)\Big)^{2}}.
\end{equation}Letting $$\delta:=\frac{K_{4}^{-2}}{K_{4}+ 4K_{5}+4K_{2}^{-1}K_{5}}$$ it is clear that $\delta$ only depends on $\beta$ and $x$. Combining Lemma \ref{Erdos lemma} and (\ref{conclusion equation}) we obtain $$\lambda(\limsup_{n\to\infty}E_{n})\geq \delta r.$$ Therefore (\ref{Lebesgue density 2}) holds and we may conclude that $W_{\beta}(\Psi)$ is a set of full measure within $I_{\beta}$.
\end{proof}
\section{Proof of Theorem \ref{beta theorem}}
\label{beta theorem section}
In this section we prove Theorem \ref{beta theorem}. Our proof is straightforward and relies on basic properties of the Lebesgue measure. For ease of exposition we briefly recall the definition of decaying regularly. We say that $\Psi$ is decaying regularly if for each $m\in\mathbb{N}$ there exists $C_{m}\in\mathbb{N}$ such that
\begin{equation}
\label{decay reg}
\frac{\Psi(n+m)}{\Psi(n)}\geq \frac{1}{C_{m}}
\end{equation} for every $n\in\mathbb{N}.$

Suppose $\Psi:\mathbb{N}\to\mathbb{R}_{\geq 0}$ satisfies $\sum_{n=1}^{\infty}2^{n}\Psi(n)=\infty.$ Given $k\in\mathbb{N}$ let $\Psi_{k}:\mathbb{N}\to\mathbb{R}_{\geq 0}$ be defined via the equation $\Psi_{k}(n):=\Psi(n)k^{-1}.$ For each $k\in\mathbb{N}$ the summation $\sum_{n=1}^{\infty}2^{n}\Psi_{k}(n)$ also diverges. If $\beta$ is approximation regular then $W_{\beta}(\Psi_{k})$ is a set of full measure within $I_{\beta}$ for each $k\in\mathbb{N}$. Therefore $$\Omega_{\beta}(\Psi):=\bigcap_{k=1}^{\infty}W_{\beta}(\Psi_{k})$$ is also of full measure. Let $$\Gamma_{\beta}(\Psi):=I_{\beta}\setminus \Omega_{\beta}(\Psi),$$ so if $\beta$ is approximation regular then $\lambda(\Gamma_{\beta}(\Psi))=0.$ We introduce the functions $T_{0}(x)=\beta x$ and $T_{1}(x)=\beta x-1.$ We will denote a typical element of $\{T_{0},T_{1}\}^{n}$ by $a=(a_{1},\ldots,a_{n}).$ Moreover, we let $a(x)$ denote $(a_{n}\circ \cdots \circ a_{1})(x).$ By $\{T_{0},T_{1}\}^{0}$ we denote the set consisting of the identity function. Let $$\Delta_{\beta}(\Psi):=\bigcup_{n=0}^{\infty}\bigcup_{a\in\{T_{0},T_{1}\}^{n}}a^{-1}(\Gamma_{\beta}(\Psi)).$$ Since $T_{0}^{-1}$ and $T_{1}^{-1}$ are both similitudes it follows that $\lambda(\Delta_{\beta}(\Psi))=0$ whenever $\beta$ is approximation regular. We are now ready to prove Theorem \ref{beta theorem}.
\begin{proof}[Proof of Theorem \ref{beta theorem}]
Assume $\beta$ is approximation regular, $\Psi:\mathbb{N}\to\mathbb{R}_{\geq 0}$ is decaying regularly and $\sum_{n=1}^{\infty}2^{n}\Psi(n)=\infty$. Let $x\in I_{\beta}\setminus \Delta_{\beta}(\Psi).$ By the above $I_{\beta}\setminus \Delta_{\beta}(\Psi)$ is a set of full Lebesgue measure within $I_{\beta}$. We now show that $x$ has a $\beta$-expansion $(\epsilon_{i})_{i=1}^{\infty}$ which satisfies $$0\leq x-\sum_{i=1}^{n}\frac{\epsilon_{i}}{\beta^{i}}\leq \Psi(n)$$ for infinitely many $n\in \mathbb{N}.$ Since $x\in I_{\beta}\setminus\Delta_{\beta}(\Psi)$ it is clear that $x\in W_{\beta}(\Psi)$. Therefore there exists infinitely many solutions to the inequalities $$0\leq x-\sum_{i=1}^{n}\frac{\epsilon_{i}}{\beta^{i}}\leq \Psi(n).$$ Let $(\epsilon_{i}^{1})_{i=1}^{n_{1}}$ be the first sequence whose level $n_{1}$ sum satisfies these inequalities. Without loss of generality we may assume $(\epsilon_{i}^{1})_{i=1}^{n_{1}}$ is an $n_{1}$-prefix for $x$. In which case, multiplying through by $\beta^{n_{1}}$ in (\ref{prefix equation}) gives us $$(T_{\epsilon_{n_{1}}^{1}}\circ\cdots \circ T_{\epsilon_{1}^{1}})(x)=\beta^{n_{1}}x-\epsilon_{1}^{1}\beta^{n_{1}-1}-\cdots -\epsilon_{n_{1}-1}^{1}\beta-\epsilon_{n_{1}}^{1} \in I_{\beta}.$$ Let $C^{1}\in\mathbb{N}$ be sufficiently large that
\begin{equation}
\label{C^{1} equation}
\frac{\Psi_{C^{1}}(n)}{\beta^{n_{1}}}\leq \Psi(n+n_{1}),
\end{equation}for all $n\in\mathbb{N}$. Such a $C^{1}$ exists since $\Psi$ is decaying regularly. Since $x\in I_{\beta}\setminus \Delta_{\beta}(\Psi)$ we have $(T_{\epsilon_{n_{1}}^{1}}\circ\cdots \circ T_{\epsilon_{1}^{1}})(x)\in W_{\beta}(\Psi_{C^{1}})$. Therefore there exists $(\epsilon_{1}^{2},\ldots,\epsilon_{n_{2}}^{2})$ such that
\begin{equation}
\label{final equation 1}
(T_{\epsilon_{n_{1}}^{1}}\circ\cdots \circ T_{\epsilon_{1}^{1}})(x)-\sum_{i=1}^{n_{2}}\frac{\epsilon_{i}^{2}}{\beta^{i}}\leq \Psi_{C^{1}}(n_{2}).
\end{equation}Dividing through by $\beta^{n_{1}}$ in (\ref{final equation 1}) and applying (\ref{C^{1} equation}) yields $$x-\sum_{i=1}^{n_{1}}\frac{\epsilon_{i}^{1}}{\beta^{i}}-\frac{1}{\beta^{n_{1}}}\sum_{i=1}^{n_{2}}\frac{\epsilon_{i}^{2}}{\beta^{i}}\leq \frac{\Psi_{C^{1}}(n_{2})}{\beta^{n_{1}}}\leq \Psi(n_{1}+n_{2}) .$$ Without loss of generality we may assume that $(\epsilon_{1}^{1},\ldots, \epsilon_{n_{1}}^{1},\epsilon_{1}^{2},\ldots,\epsilon_{n_{2}}^{2})$ is an $n_{1}+n_{2}$ prefix for $x$.

Since $x\in I_{\beta}\setminus \Delta_{\beta}(\Psi)$ we have $(T_{\epsilon_{n_{2}}^{2}}\circ\cdots \circ T_{\epsilon_{1}^{2}}\circ T_{\epsilon_{n_{1}^{1}}}\circ\cdots \circ T_{\epsilon_{1}^{1}})(x)\in W_{\beta}(\Psi_{k})$ for each $k\in\mathbb{N}$. We choose $C^{2}\in\mathbb{N}$ sufficiently large that $$\frac{\Psi_{C^{2}}(n)}{\beta^{n_{1}+n_{2}}}\leq \Psi(n+n_{1}+n_{2}),$$ for all $n\in\mathbb{N}.$ We then repeat the above argument with $C^{1}$ replaced by $C^{2},$ and $(T_{\epsilon_{n_{1}}^{1}}\circ\cdots \circ T_{\epsilon_{1}^{1}})$ replaced by $(T_{\epsilon_{n_{2}}^{2}}\circ\cdots \circ T_{\epsilon_{1}^{2}}\circ T_{\epsilon_{n_{1}}^{1}}\circ\cdots \circ T_{\epsilon_{1}^{1}})$ to obtain a sequence $(\epsilon_{1}^{3},\ldots,\epsilon_{n_{3}}^{3})$ such that $$x-\sum_{i=1}^{n_{1}}\frac{\epsilon_{i}}{\beta^{i}}-\frac{1}{\beta^{n_{1}}}\sum_{i=1}^{n_{2}}\frac{\epsilon_{i}^{2}}{\beta^{i}}-\frac{1}{\beta^{n_{1}+n_{2}}}\sum_{i=1}^{n_{3}}\frac{\epsilon_{i}^{3}}{\beta^{i}}\leq \Psi(n_{1}+n_{2}+n_{3}).$$ Again we may assume that $(\epsilon_{1}^{1},\ldots, \epsilon_{n_{1}}^{1},\epsilon_{1}^{2},\ldots,\epsilon_{n_{2}}^{2}, \epsilon_{1}^{3},\ldots,\epsilon_{n_{3}}^{3})$ is an $n_{1}+n_{2}+n_{3}$ prefix for $x$.

Repeatedly applying the above procedure we obtain an infinite sequence $(\epsilon_{i})_{i=1}^{\infty}$ which forms a $\beta$-expansion for $x$ and satisfies $$0\leq x-\sum_{i=1}^{n}\frac{\epsilon_{i}}{\beta^{i}}\leq \Psi(n)$$ for infinitely many $n\in \mathbb{N}.$
\end{proof}

\section{Final comments}
\label{last section}
In this final section we make a few comments on the connection between the set of points with a unique $\beta$-expansion and $I_{\beta}\setminus W_{\beta}(\Psi)$. Let $$U_{\beta}:=\Big\{x\in\Big(0,\frac{1}{\beta-1}\Big): x \textrm{ has a unique } \beta\textrm{-expansion}\Big\}.$$ $U_{\beta}$ is a well studied object. It is a consequence of the work of Dar\'{o}czy and Katai \cite{DaKa}, and Erd\H{o}s, Jo\'{o} and Komornik \cite{Erdos2}, that $U_{\beta}$ is nonempty if and only if $\beta\in(\frac{1+\sqrt{5}}{2},2).$ Let $\beta_{c}\approx 1.78723$ be the Komornik-Loreti constant introduced in \cite{KomLor}. Glendinning and Sidorov showed in \cite{GlenSid} that: $U_{\beta}$ is countable if $\beta\in(\frac{1+\sqrt{5}}{2},\beta_{c})$, $U_{\beta_{c}}$ is uncountable with zero Hausdorff dimension, and $U_{\beta}$ has strictly positive Hausdorff dimension if $\beta\in(\beta_{c},2)$. Moreover, $\dim_{H}(U_{\beta})\to 1$ as $\beta\to 2.$

The significance of the set $U_{\beta}$ is that if $x\in U_{\beta}$ then $$\frac{\kappa}{\beta^{n}(\beta-1)}\leq x-\sum_{i=1}^{n}\frac{\epsilon_{i}}{\beta^{i}}\leq \frac{1}{\beta^{n}(\beta-1)}$$for all $n\in\mathbb{N}$. Where $(\epsilon_{i})_{i=1}^{\infty}$ is the unique $\beta$-expansion for $x,$ and $\kappa$ is some strictly positive constant that only depends on $x$. This implies that for any $\Psi(n)=O(\gamma^{-n})$ where $\gamma>\beta$ there are finitely many solutions to the set of inequalities $$0\leq x-\sum_{i=1}^{n}\frac{\epsilon_{i}}{\beta^{i}}\leq \Psi(n).$$  Therefore if $\Psi$ decays sufficiently quickly and $\beta\in (\frac{1+\sqrt{5}}{2},2)$ then $I_{\beta}\setminus W_{\beta}(\Psi)$ is always infinite. We finish with an example that emphasises the above.
\begin{example}
Take $\beta\approx 1.76929,$ the appropriate root of $x^{3}-2x-2=0.$ Then $\beta$ is a Garsia number and by Theorem \ref{main theorem} is approximation regular. In which case if we take $\Psi(n)=2^{-n}$ we have $W_{\beta}(\Psi)$ is of full measure. Yet by the above $I_{\beta}\setminus W_{\beta}(\Psi)$ contains an infinite set.
\end{example}


\begin{thebibliography}{100}
\bibitem{BerA} V. Beresnevich, \textit{On approximation of real numbers by real algebraic numbers,} Acta Arith. 90 (1999), no. 2, 97-–112.
\bibitem{Ber} V. Beresnevich, \textit{Application of the concept of regular systems of points in metric number theory,} Vestsī Nats. Akad. Navuk Belarusī Ser. Fīz.-Mat. Navuk 2000, no. 1, 35–-39, 140.
\bibitem{ChEr} K. L. Chung, P. Erd\H{o}s, \textit{On the application of the Borel-Cantelli lemma,} Trans. Amer. Math. Soc. 72, (1952). 179--186.
\bibitem{Daj} K. Dajani, V. Komornik, P. Loreti, M. de Vries, \textit{Optimal expansions in non-integer bases,} Proc. Amer. Math. Soc. 140 (2012), no. 2, 437--447.
\bibitem{DaKa}  Z. Dar\'{o}czy, I. Katai, \textit{Univoque sequences}, Publ. Math. Debrecen {\bf 42} (1993),  397--407.
\bibitem{DufSch} R. J. Duffin, A. C. Schaeffer, \textit{Khintchine’s problem in metric Diophantine approximation,} Duke Math. J., 8 (1941). 243-–255.
\bibitem{Erdos2} P. Erd\H{o}s, I. Jo\'{o}, V. Komornik, \textit{Characterization of the unique expansions $1 =\sum_{i=1}^{\infty}q^{-n_{i}}$ and
related problems}, Bull. Soc. Math. Fr. {\bf 118} (1990), 377--390.
\bibitem{Erdos} P. Erd\H{o}s, \textit{On a family of symmetric Bernoulli convolutions,} Amer. J. Math. 61, (1939). 974–-976.
\bibitem{Falconer} K. Falconer, \textit{Sets with large intersection properties,}  J. London Math. Soc. (2) 49 (1994), no. 2, 267-–280.
\bibitem{Gar} A. Garsia, \textit{Arithmetic properties of Bernoulli convolutions,} Trans. Amer. Math. Soc. 102 1962 409–-432.
\bibitem{GlenSid} P. Glendinning, N. Sidorov, \textit{Unique representations of real numbers in non-integer bases}, Math. Res. Letters {\bf 8} (2001), 535--543.
\bibitem{HP} K. Hare, M. Panju, \textit{Some comments on Garsia numbers}, Math. Comp. 82 (2013), no. 282, 1197–-1221.
\bibitem{JesWin} B. Jessen, A. Wintner, \textit{Distribution functions and the Riemann zeta function,} Trans. Amer. Math. Soc. 38 (1935), no. 1, 48–-88.
\bibitem{Kempton} T. Kempton, \textit{Counting $\beta$-expansions and the absolute continuity of Bernoulli convolutions,} Monatsh. Math. 171 (2013), no. 2, 189--203.
\bibitem{KomLor} V. Komornik and P. Loreti, Unique developments in non-integer bases, Amer. Math. Monthly 105 (1998), no. 7, 636--639.
\bibitem{SimMau} R. D. Mauldin, K. Simon, \textit{The equivalence of some Bernoulli convolutions to Lebesgue measure,} Proc. Amer. Math. Soc. 126 (1998), no. 9, 2733--2736.
\bibitem{Parry} W. Parry, \textit{On the $\beta$-expansions of real numbers}, Acta Math. Acad. Sci. Hung. {\bf 11} (1960) 401--416.
\bibitem{PerRev} T. Persson, H. Reeve, \textit{A Frostman type lemma for sets with large intersections, and an application to Diophantine approximation,} to appear in  Proceedings of the Edinburgh Mathematical Society.
\bibitem{PerRevA} T. Persson, H. Reeve, \textit{On the diophantine properties of $\lambda$-expansions,}  Mathematika, volume 59 (2013), issue 1, 65-–86.
\bibitem{Renyi} A. R\'{e}nyi, \textit{Representations for real numbers and their ergodic properties}, Acta Math. Acad. Sci. Hung. {\bf 8} (1957) 477--493.
\bibitem{Shm} P. Shmerkin, \textit{On the exceptional set for absolute continuity of Bernoulli convolutions,} Geom. Funct. Anal. 24 (2014), no. 3, 946-–958.
\bibitem{Sid} N. Sidorov, Almost every number has a continuum of $\beta$-expansions, Amer. Math. Monthly 110 (2003), no. 9, 838–-842.
\bibitem{Solomyak} B. Solomyak, \textit{On the random series $\sum \pm \lambda^{n}$ (an Erd\H{o}s problem)},  Ann. of Math. (2) 142 (1995), no. 3, 611–-625.

\end{thebibliography}
\end{document}